\colorlet{LightRubineRed}{RubineRed!70}
\colorlet{Mycolor1}{green!10!orange}
\definecolor{Mycolor2}{HTML}{00F9DE}
\def\pprec{\mathrel{\scalebox{.9}[1]{$\prec$}\mkern-5mu%
  \scalebox{.4}[1]{$\prec$}\mkern-5.5mu\scalebox{.4}[1]{$\prec$}}}
\newcommand\utimes{\mathbin{\ooalign{$\cup$\cr%
   \hfil\raise0.42ex\hbox{$\scriptscriptstyle\times$}\hfil\cr}}}
\newcommand\bigutimes{\mathop{\ooalign{$\bigcup$\cr%
   \hfil\raise0.36ex\hbox{$\scriptscriptstyle\boldsymbol{\times}$}\hfil\cr}}}
\theoremstyle{definition}
\newtheorem{thm}{Theorem}[section] 
\newtheorem{cor}[thm]{Corollary}
\newtheorem{prop}[thm]{Proposition}
\theoremstyle{definition}
\newtheorem{defn}[thm]{Definition}
\newtheorem{rem}[thm]{Remark}
\newcommand{\diag}{\mathrm{Diag}}
\newcommand{\E}{\mathbb{E}}
\newcommand{\cA}{\mathcal{A}}
\newcommand{\bC}{\mathbb{C}}
\newcommand{\cB}{\mathcal{B}}
\newcommand{\cC}{\mathcal{C}}
\let\phi=\varphi
\let\epsilon=\varepsilon
\numberwithin{equation}{section}
\title{The Distribution of polynomials in monotone independent elements}
\author[M. Banna]{Marwa Banna}
\address{New York University Abu Dhabi, Division of Science, Mathematics, Abu Dhabi, UAE}
\email{marwa.banna@nyu.edu}
\author[PL. Tseng]{Pei-Lun Tseng}
\address{New York University Abu Dhabi, Division of Science, Mathematics, Abu Dhabi, UAE}
\email{pt2270@nyu.edu}
\date{\today}
\keywords{ Monotone independence, Infinitesimal monotone independence, Distributions of polynomials, Partial traces }
\subjclass[2020]{46L53, 60B20, 15B52}
\begin{document}

\begin{abstract}

Building on the work of  Arizmendi and Celestino (2021), we derive the $*$-distributions of polynomials in monotone independent and infinitesimally monotone independent elements. For non-zero complex numbers $\alpha$ and $\beta$, we derive explicitly the $*$-distribution of $p_{\alpha,\beta}=\alpha ab + \beta ba$ whenever $a$ and $b$ are monotone or infinitesimally monotone independent elements. This encompasses both cases of the commutator and anti-commutator. This approach can be pushed to study more general polynomials. As applications, we derive the limiting distribution with respect to the partial trace of polynomials in a certain class of random matrices.

\end{abstract}

\maketitle

\date{}

\section{Introduction}

In non-commutative probability theory, diverse notions of independence arise through the examination of purely algebraic representations of fundamental concepts in classical probability theory. Five notions of independence were identified to satisfy natural axioms of probability: tensor (classical), free, Boolean, monotone, and its mirror image, anti-monotone. These notions of independence were classified by Speicher \cite{s97} and Muraki \cite{mu02,M03} and also by Ben Ghorbal and Schürmann \cite{BG-Sch} in the framework of category theory.

Following Voiculescu's introduction of the concept of free independence, numerous extensions and generalizations of freeness have arisen within the non-commutative setting. Among these, one notable generalization is infinitesimal freeness, see \cite{FN10} and \cite{BGN03}. This extension has found practical applications in investigating the occurrence of spikes in diverse deformed random matrix models, as shown in \cite{Au21, SH18, CDG22}. Consequently, extensions of infinitesimal Boolean and monotone independence have also been introduced in  \cite{PT21, H11}.

Just like the classical counterpart, these notions of independence enable the computation of probability distributions for sums and products of independent random variables when we have knowledge of their individual distributions. This is achieved through the corresponding operations of additive and multiplicative convolutions. For more details, we refer to \cite{V85, V87} for free convolutions, \cite{SW97, F04} for Boolean convolutions, and \cite{M2000, B05} for monotone convolutions. Similarly, analogous convolutions, known as infinitesimal additive and multiplicative convolutions, have been developed in  \cite{BS12, M19, P10, TS19} for the free case and \cite{PT21, TS22} for the Boolean and monotone cases.

Beyond sums and products, there is a keen interest in determining distributions of general polynomials in independent variables. A natural starting point involves deriving the distributions of two particular polynomials: the commutator $i(x_1x_2-x_2x_1)$, and the anti-commutator, $x_1x_2+x_2x_1$, for independent variables $x_1$ and $x_2$.  While the distribution of the commutator in the free setting was derived combinatorially using cumulants in prior works \cite{NS06}, the distribution of the anti-commutator was only obtained recently in \cite{FMNS20,P21}. This is also the case for Boolean independent variables where the distributions of both the commutator and anti-commutator were determined recently in \cite{MT23}. Furthermore, the infinitesimal free and Boolean distributions of commutators and anti-commutators were, under certain assumptions, derived recently in \cite{MT23}.

On the other hand, distributions of commutators and anti-commutators for monotone independent variables and infinitesimally monotone independent remain unexplored. One of our primary objectives in this paper is to address this gap. As polynomials, we study closely linear spans in Section \ref{Section monotone polynomials} for which we obtain explicit results. In fact, let $(\mathcal{C},\phi)$ be a $*$-probability space and $a$, $b \in \cC$  such that $\cA_a\prec \mathring{\cA}_b$  where $\cA_a$ is the $*$-algebra generated by $a$ and $\mathring{\cA}_b$ is the $*$-algebra generated by $b$ and $1$. We determine explicitly in Theorem \ref{thm: M-linear span} the $*$-distribution of $p_{\alpha,\beta}=\alpha ab + \beta ba$, where $\alpha$ and $\beta$ are non-zero complex numbers. Theorem \ref{thm: M-linear span} yields for the particular case where  $\phi(b)=0$ that for any $k \geq 1$,
$$
\phi(p_{\alpha,\beta}^k)=\begin{cases}
\sqrt{\alpha\beta \phi(b^2)}^k\phi(a^k) & \text{ if }  k \text{ is even,} \\ 0  & \text{ if }  k \text{ is odd.} \end{cases}$$
The framework of Theorem \ref{thm: M-linear span} encompasses both cases of the commutator $q=i(ab-ba)$ and anti-commutator $p=ab+ba$ and yields for each $k\geq 1,$
\begin{eqnarray*}
\varphi(p^k)&=& \frac{1}{2\sqrt{\varphi(b^2)}}\left([\varphi(b)+\sqrt{\varphi(b^2)}]^{k+1}-[\varphi(b)-\sqrt{\varphi(b^2)}]^{k+1}\right)\varphi(a^k); \\
\varphi(q^k)&=& \frac{1}{2}\left(1-(-1)^{k+1}\right)\left(\sqrt{\varphi(b^2)-\varphi(b)^2}\right)^{k}\varphi(a^k).
\end{eqnarray*}
Moreover, using the upper triangular technique developed in \cite{TS19,PT21}, we extend our results to the infinitesimal monotone setting where we study linear spans in Section \ref{Section inf linear span} and general polynomials as described in Remark \ref{remark inf general poly}.  

In the work of Lenczewski \cite{l14}, random matrix models for asymptotic monotone independence with respect to the partial trace were given and were further discussed in \cite{CLS22,CN22}. On the other hand, Mingo and Tseng \cite{MT23} introduced recently a constructive approach for generating random matrix models for which asymptotic monotone independence holds. We extend in this paper the utility of our results to derive the distribution of such random matrices with respect to the partial trace, as elaborated in Section \ref{Section: RM example}.

Our approach builds upon the work of Arizmendi and Celestino \cite{AC21} who study distributions of polynomials with cyclic monotone elements. The notion of cyclic monotone independence  was introduced by Collins, Hasebe, and Sakuma \cite{CHS18} to describe the joint limiting spectral distribution of some pairs of matrices. They derived spectral formulas for certain polynomials of degree $2$ and $3$ in cyclic monotone elements, including the commutator and anti-commutator. In \cite{AC21}, Arizmendi and Celestino offered a novel approach for finding spectral distributions for polynomials in cyclic monotone elements. A pivotal aspect of their method lies in the observation that many polynomials can be expressed as the $(1,1)$-entry of matrix products, allowing for the derivation of the distribution by examining the corresponding product matrices. Building on this idea, we extend this approach to study polynomials in monotone independent elements. Moreover, this approach can be further pushed to investigate more general polynomials which lead to computing the precise distributions of a wide range of polynomials in random matrices as Proposition \ref{prop Wigner general poly}.

\tableofcontents

\section{Preliminaries}

\subsection{Monotone Independence}

A \emph{non-commutative probability space} (or ncps for short) is a pair $(\cC,\varphi)$ such that $\cC$ is a unital algebra and $\varphi$ is a linear functional on $\cC$ that $\varphi(1)=1$. Moreover, a ncps $(\cC,\phi)$ is called a \emph{$*$-probability space} if we further assume $\cC$ is a $*$-algebra and $\phi(a^*a)\geq 0$ for all $a\in \cC$. 

For $a_1,\dots,a_k\in \cC$, the \emph{$*$-distribution} of $\{a_1,\dots,a_k\}$ is the set of all possible joints moments  
$$
\mu_{a_1,\dots,a_k}=\{\varphi(a_{i_1}^{m_1}\cdots a_{i_n}^{m_n})\mid n\geq1,\ 1\leq i_1,\dots,i_n\leq k\text{ and }m_1,\dots,m_n\in \{1,*\}  \}.
$$
\begin{defn}
Suppose that $(\mathcal{C},\varphi)$ is a $*$-probability space, and $\mathcal{A}$ and $\cB$ are $*$-subalgebras of $\mathcal{C}$ that are not necessarily unital. We say that  $(\mathcal{A},\mathcal{B})$ is \emph{monotone independent} and write  $\mathcal{A} \prec \mathcal{B}$ if 
\begin{eqnarray}
\phi(b_0a_1b_1\cdots a_nb_n) &=& \phi(a_1\cdots a_n)\phi(b_0)\phi(b_1)\cdots \phi(b_n),
\end{eqnarray}
for all $a_1,\dots,a_n\in \cA$, $b_1,\dots,b_{n-1}\in \cB$ and $b_0,b_n$ are either $1$ or elements in $\cB$.
\end{defn}

For two sets $A_1$ and $A_2$ of elements in $\mathcal{C}$, we say $(A_1,A_2)$ is monotone independent and write $A_1\prec A_2$ if $\cA_{1}\prec \mathring{\cA}_2$ where $\cA_1$ is the $*$-algebra generated by $A_1$ and $\mathring{\cA}_2$ is the $*$-algebra generated by $A_2$ and $1$.  

\begin{rem}
Note that under the notion of monotone independence of pair algebras $(\cA_1,\cA_2)$,  we usually don't consider $\cA_1$ to be unital; otherwise, for all $a\in \cA_2,$ we have
$$
\phi(a^n) = \phi(1\cdot a \cdot 1 \cdot a \cdots 1 \cdot a) = \phi(a)^n\phi(1\cdot 1 \cdots 1\cdot 1)=\phi(a)^n,
$$
which is not the case that we are interested in. 
\end{rem}

\subsection{Infinitesimally Monotone Independence}\label{subsection: inf Monotone}

Let $(\cC, \phi)$ be a $*$-probability space and $\phi': \cC \rightarrow \bC$ be a linear functional such that $\phi'(1) = 0$ and $\phi(a^*)=\overline{\phi(a)}$ for all $a\in\cC$. Then we call the triple $(\cC, \phi, \phi')$ an \textit{infinitesimal $*$-probability space}.

For $a_1,\dots,a_k\in \cC$, the \emph{infinitesimal $*$-distribution} of $\{a_1,\dots,a_k\}$ is the
pair $(\mu_{a_1,\dots,a_k},\mu'_{a_1,\dots,a_k})$ where 
$\mu_{a_1,\dots,a_k}$ is the $*$-distribution of $\{a_1,\dots,a_k\}$ and 
$$
\mu'_{a_1,\dots,a_k}=\{\varphi'(a_{i_1}^{m_1}\cdots a_{i_n}^{m_n})\mid n\geq1,\ 1\leq i_1,\dots,i_n\leq k\text{ and }m_1,\dots,m_n\in \{1,*\}  \}.
$$
\begin{defn}
Let $(\cC,\varphi,\varphi')$ be an infinitesimal $*$-probability space and let $\cA$ and $\cB$ be $*$-subalgebras of $\cC$ that are not necessary unital. We say that $(\mathcal{A},\mathcal{B})$ is \emph{infinitesimally monotone independent} and write $\mathcal{A} \pprec \mathcal{B}$ if
\begin{eqnarray*}
\phi(b_0a_1b_1\cdots a_nb_n) &=& \phi(a_1\cdots a_n)\phi(b_0)\phi(b_1)\cdots \phi(b_n), \\
\phi'(b_0a_1b_1\cdots a_nb_n) &=& \phi'(a_1\cdots a_n)\phi(b_0)\phi(b_1)\cdots \phi(b_n) \\
&&+ \phi(a_1\cdots a_n)\sum_{j=0}^n \phi(b_0)\cdots \phi(b_{j-1})\phi'(b_j)\phi(b_{j+1})\cdots \phi(b_n),
\end{eqnarray*}
for all $a_1,\dots,a_n\in \cA$, $b_1,\dots,b_{n-1}\in \cB$ and $b_0,b_n$ are either $1$ or elements in $\cB$.
\end{defn}
For two sets $A_1$ and $A_2$ in $\mathcal{C}$, we say that $(A_1,A_2)$ is infinitesimally monotone independent and write $A_1\pprec A_2$ if $\cA_1\pprec \mathring{\cA}_2$ with $\cA_1$ being the $*$-algebra generated by $A_1$ and $\mathring{\cA}_2$ be the $*$-algebra generated by $A_2$ and $1$.

Note that the infinitesimal monotone independence can be understood as monotone independence with amalgamation. To be precise, let $(\cC,\phi,\phi')$ be an infinitesimal $*$-probability space and define 
$$
\widetilde{\cC}=\Big\{\begin{bmatrix}
 a & a' \\ 0 & a   
\end{bmatrix}  \Big | a,a'\in\mathcal{C}  \Big\},\  \widetilde{\mathbb{C}}=\Big\{\begin{bmatrix}
 c & c' \\ 0 & c   
\end{bmatrix}  \Big | c,c'\in\mathbb{C}  \Big\}.
$$
Finally, let  $\widetilde{\phi}:\widetilde{\cC}\to\widetilde{\mathbb{C}}$ be the linear functional defined by
$$\widetilde{\phi}\Big(\begin{bmatrix}
 a & a' \\ 0 & a   
\end{bmatrix}\Big)=\begin{bmatrix}
    \phi(a) & \phi'(a)+\phi(a') \\ 0 & \phi(a)
\end{bmatrix}.
$$
Then the triple $(\widetilde{\cC},\widetilde{\mathbb{C}},\widetilde{\phi})$ forms a $\widetilde{\mathbb{C}}$-valued probability space (see \cite{TS19}). The correspondence to independence with amalgamation is illustrated by the following theorem by Perales and Tseng \cite{PT21}.
\begin{thm}\label{thm: inf-M vs OV-M}
Let $(\cC,\varphi,\varphi')$ be an infinitesimal $*$-probability space and let $\cA$ and $\cB$ be $*$-subalgebras of $\cC$ that are not necessarily unital. Then  $(\cA,\cB)$ is infinitesimally monotone independent if and only if $(\widetilde{\cA},\widetilde{\cB})$ is monotone independent with respect to $\widetilde{\varphi}$ where 
$$
\widetilde{\cA}=
\Big\{\begin{bmatrix}
 a & a' \\ 0 & a   
\end{bmatrix}  \Big | a,a'\in\mathcal{A}  \Big\}
\text{ and } \widetilde{\cB} = \Big\{\begin{bmatrix}
 a & a' \\ 0 & a   
\end{bmatrix}  \Big | a,a'\in\mathcal{B}  \Big\}.$$
\end{thm}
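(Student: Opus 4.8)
The plan is to trade the matrix picture for dual numbers: record $\begin{bmatrix} a & a' \\ 0 & a \end{bmatrix}$ as $a+a'\varepsilon$ with $a,a'\in\cC$ and $\varepsilon^{2}=0$, so that $\widetilde{\cC}$ is identified with the algebra $\cC[\varepsilon]/(\varepsilon^{2})$ of dual numbers over $\cC$ (with the entrywise $*$), $\widetilde{\mathbb{C}}=\mathbb{C}[\varepsilon]/(\varepsilon^{2})$, multiplication reads $(a+a'\varepsilon)(b+b'\varepsilon)=ab+(a'b+ab')\varepsilon$, and the functional is $\widetilde{\varphi}(a+a'\varepsilon)=\varphi(a)+\big(\varphi'(a)+\varphi(a')\big)\varepsilon$. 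Two elementary facts drive the proof: $\widetilde{\mathbb{C}}$ is commutative, so the right-hand side of the monotone-independence identity for $\widetilde\varphi$ is unambiguous; and two elements of $\widetilde{\mathbb{C}}$ coincide if and only if their $\varepsilon^{0}$- and $\varepsilon^{1}$-coefficients coincide. Hence everything reduces to expanding
\[
\widetilde{\varphi}\big(\widetilde{b}_{0}\widetilde{a}_{1}\widetilde{b}_{1}\cdots\widetilde{a}_{n}\widetilde{b}_{n}\big)=\widetilde{\varphi}(\widetilde{a}_{1}\cdots\widetilde{a}_{n})\,\widetilde{\varphi}(\widetilde{b}_{0})\cdots\widetilde{\varphi}(\widetilde{b}_{n})
\]
in powers of $\varepsilon$, where $\widetilde{a}_{i}=a_{i}+a_{i}'\varepsilon\in\widetilde{\cA}$, $\widetilde{b}_{j}=b_{j}+b_{j}'\varepsilon\in\widetilde{\cB}$, and $\widetilde{b}_{0},\widetilde{b}_{n}$ may equal $1$. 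The basic expansion to set up is $\widetilde{b}_{0}\widetilde{a}_{1}\cdots\widetilde{a}_{n}\widetilde{b}_{n}=w+\varepsilon\,w'$ with $w=b_{0}a_{1}b_{1}\cdots a_{n}b_{n}$ the unprimed word and $w'=\sum_{\ell}(\text{the word with the }\ell\text{-th letter primed})$ running over the (at most) $2n+1$ positions, so the left-hand side becomes $\varphi(w)+\big(\varphi'(w)+\varphi(w')\big)\varepsilon$; the right-hand side is the product in $\widetilde{\mathbb{C}}$ of $\widetilde\varphi(\widetilde a_{1}\cdots\widetilde a_{n})=\varphi(a_{1}\cdots a_{n})+\big(\varphi'(a_{1}\cdots a_{n})+\sum_{i}\varphi(a_{1}\cdots a_{i}'\cdots a_{n})\big)\varepsilon$ and the $\widetilde\varphi(\widetilde b_{j})=\varphi(b_{j})+\big(\varphi'(b_{j})+\varphi(b_{j}')\big)\varepsilon$.

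The forward implication then comes out by simply specializing to $a_{i}'=0$ and $b_{j}'=0$: the $\varepsilon^{0}$-coefficients match iff $\varphi(b_{0}a_{1}b_{1}\cdots a_{n}b_{n})=\varphi(a_{1}\cdots a_{n})\varphi(b_{0})\cdots\varphi(b_{n})$ (ordinary monotone independence of $(\cA,\cB)$, the first defining identity), and, the one-primed-letter words now vanishing, the $\varepsilon^{1}$-coefficients match iff $\varphi'(b_{0}a_{1}\cdots a_{n}b_{n})=\varphi'(a_{1}\cdots a_{n})\varphi(b_{0})\cdots\varphi(b_{n})+\varphi(a_{1}\cdots a_{n})\sum_{j=0}^{n}\varphi(b_{0})\cdots\varphi'(b_{j})\cdots\varphi(b_{n})$, the second defining identity.

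For the converse one runs the full expansion with $a_{i},a_{i}'\in\cA$ and $b_{j},b_{j}'\in\cB$ general. The $\varepsilon^{0}$-coefficient matches by the first identity as before. For the $\varepsilon^{1}$-coefficient, feed the first identity back into $\varphi(w')$: the word with $a_{i}$ replaced by $a_{i}'$ has $\varphi(b_{0}\cdots a_{i}'\cdots b_{n})=\varphi(a_{1}\cdots a_{i}'\cdots a_{n})\varphi(b_{0})\cdots\varphi(b_{n})$, and the word with $b_{j}$ replaced by $b_{j}'$ has $\varphi(b_{0}\cdots b_{j}'\cdots b_{n})=\varphi(a_{1}\cdots a_{n})\varphi(b_{0})\cdots\varphi(b_{j}')\cdots\varphi(b_{n})$; these are exactly the ``$a_{i}'$-terms'' and ``$b_{j}'$-terms'' that the right-hand side produces by differentiating a single factor of the product, so they cancel in pairs, and what is left on both sides is precisely the second identity (legitimately applied to the elements $a_{i},a_{i}'\in\cA$ and $b_{j},b_{j}'\in\cB$). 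The degenerate cases $b_{0}=1$ or $b_{n}=1$ are covered by the same computation with that factor set to $1$. I do not expect a genuine obstacle: the only mildly delicate point is the bookkeeping that pairs the one-primed-letter words against the product-rule terms, which one can keep under control by the direct matching just sketched or, if one prefers, by an induction on $n$ that peels off $\widetilde b_{0}$ and uses the bimodule property $\widetilde\varphi(\eta x)=\eta\,\widetilde\varphi(x)$ for $\eta\in\widetilde{\mathbb{C}}$.
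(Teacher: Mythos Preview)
The paper does not supply its own proof of this theorem; it is quoted as a result of Perales and Tseng \cite{PT21} and stated without proof, so there is nothing in the paper to compare your argument against directly.

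That said, your proof is correct and is essentially the natural one. The dual-number identification $\widetilde{\cC}\cong\cC[\varepsilon]/(\varepsilon^{2})$ is exactly the right bookkeeping device, and the crucial structural fact you use---that $\widetilde{\mathbb{C}}$ is commutative and lies in the center of $\widetilde{\cC}$---is what makes the $\widetilde{\mathbb{C}}$-valued monotone identity take the simple product form $\widetilde{\varphi}(\widetilde{b}_{0}\widetilde{a}_{1}\cdots\widetilde{a}_{n}\widetilde{b}_{n})=\widetilde{\varphi}(\widetilde{a}_{1}\cdots\widetilde{a}_{n})\prod_{j}\widetilde{\varphi}(\widetilde{b}_{j})$. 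Your expansion of both sides in $\varepsilon^{0}$- and $\varepsilon^{1}$-coefficients, together with the cancellation of the one-primed-letter words against the product-rule terms via the first (ordinary monotone) identity, is precisely the computation needed. One cosmetic remark: your labels ``forward'' and ``converse'' are swapped relative to the natural reading of the iff---specializing to $a_{i}'=b_{j}'=0$ extracts the infinitesimal identities \emph{from} the $\widetilde{\varphi}$-monotone identity, while the full expansion with general $a_{i}',b_{j}'$ is what establishes the $\widetilde{\varphi}$-monotone identity \emph{from} the infinitesimal hypotheses---but both directions are argued correctly.
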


\subsection{Random Matrices for Monotone independence}\label{subsection 2.3}

\begin{defn}
Let $(\cC_n,\phi_n)_n$ be a sequence of ncps and $(a_n,b_n)$ be a pair of elements in $\cC_n$ for each $n$. We say that the sequence $(a_n,b_n)_n$ is \emph{asymptotically monotone independent} with respect to $\phi_n$ if there exist a ncps $(\cC,\phi)$ and elements $a,b\in \cC$ with $a\prec b$ such that 
$(a_n,b_n)$ converges to $(a,b)$ in distribution; that is,
$$
\lim_{n\to\infty} \phi_n(p(a_n,b_n))=\phi(p(a,b)) \text{ for any polynomial } p
$$
and $a \prec b $  with respect to $\phi$. 

\end{defn}
We present the random matrix model given by Lenczewski \cite{l14} where asymptotic monotone independence holds with respect to the partial trace. Let $A_N$ be an $N\times N$ random matrix and $N_0\leq N$ be a fixed natural number. $A_N$ can be represented as a composition of four block matrices in the following manner:
$$
A_N=\begin{bmatrix}
    A^{(1,1)} & A^{(1,2)} \\ A^{(2,1)} & A^{(2,2)}
\end{bmatrix},
$$
where $A^{(1,1)}$ corresponds to a square submatrix of size $N_0\times N_0$, $A^{(2,2)}$  to a square submatrix of size $(N-N_0)\times (N-N_0)$, $A^{(1,2)}$  to a submatrix of size $N_0\times (N-N_0)$, and $A^{(2,1)}$  to a submatrix of size $(N-N_0)\times N_0$. The partial trace $\psi_N$ of $A_N$ is then defined by
\begin{equation}\label{formula: partial-trace}
\psi_N(A_N):= \frac{1}{N_0}(\E\circ Tr_{N_0})(A_N^{(1,1)})
\end{equation}
where $Tr_{N_0}$ is the non-normalized trace. 
Here we note that the partial trace $\psi_N$ depends on $N_0$. Lenczewski showed that if $A_N$ and $B_N$ are independent GUE matrices, then ($T_{A_N},B_N)$ is asymptotically monotone independent with respect to $\psi_N$ where 
$$T_{A_N}=\begin{bmatrix}
    0 & A^{(1,2)} \\ A^{(2,1)} & 0 
\end{bmatrix}.$$

In \cite{MT23}, Mingo and Tseng considered the notion of infinitesimal freeness and  showed a method for utilizing infinitesimal idempotents to construct monotone independent random matrix models. By this construction, they recover the random matrix models in \cite{l14} as particular cases. We illustrate this construction for the reader's convenience and start by recalling some notation.

Let $(\cC,\phi,\phi')$ be an infinitesimal probability space, and let $j\in\cC$ be an \emph{infinitesimal idempotent} element, that is,  $\phi(j^k)=0$ for all $k\geq 1$ and $j=j^2$. Assume furthermore that $\phi'(j)\neq 0$ and let $\cA$ be a unital subalgebra of $\cC$ that is infinitesimally free from $\{j\}$. Finally, define $$\psi(a)=\frac{1}{\phi'(j)}\phi'(aj) \text{ for all }a\in\cC,$$
and note that $\psi(a)=\varphi(a)$ for all $a\in\cA.$
It is clear that $\psi$ is a linear functional on $\cC$ with $\psi(1)=1$, and hence, $(\cC,\psi)$ is a ncps. Finally, set  $j^{(-1)}:=j$ and $j^{(1)}:=1-j$, and define $\mathcal{J}_a(\cA)$ to be the algebra generated by
$$
\{j^{(\epsilon_1)}a_1j^{(\epsilon_2)}a_2\cdots j^{(\epsilon_{k-1})}a_{k-1}j^{(\epsilon_k)}\mid k\geq 0, \epsilon_1,\dots,\epsilon_k\in\{\pm 1\} \text{ with }\epsilon_1\neq \cdots \neq \epsilon_k, a_1,\dots,a_k\in \cA\}.
$$
We refer to  \cite{FN10, M19} for more details on infinitesimal free independence and recall now the following result in \cite[Theorem 5.5]{MT23}.
\begin{thm}\label{thm: Inf idem to M}
Suppose $\cA$ and $\cB$ are unital subalgebras of $\cC$ such that $\{\cA,\cB\}$ are infinitesimally free from $j$  in $(\cC,\varphi,\varphi')$. If $\cA$ and $\cB$ are free with respect to $\varphi$, then 
$(\mathcal{J}_a(\cA),\cB)$ is monotone independent  with respect to $\psi$. 
\end{thm}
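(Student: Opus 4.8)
The plan is to reduce everything to the two defining properties of $\psi$, namely $\psi(c)=\varphi'(j)^{-1}\varphi'(cj)$ and $\psi|_{\mathcal{A}}=\varphi|_{\mathcal{A}}$, together with one computational lemma about $\varphi'$ of alternating products of $j$ and the ordinary freeness of $\mathcal{A}$ and $\mathcal{B}$. Write $\mathcal{D}$ for the algebra generated by $\mathcal{A}\cup\mathcal{B}$; since $\mathcal{D}$ is infinitesimally free from $j$, the computation that yields $\psi|_{\mathcal{A}}=\varphi|_{\mathcal{A}}$ (a special case of the key lemma below) gives more generally $\psi|_{\mathcal{D}}=\varphi|_{\mathcal{D}}$, in particular $\psi|_{\mathcal{B}}=\varphi|_{\mathcal{B}}$. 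Hence, writing the monotone factorization out with $\psi(c)=\varphi'(j)^{-1}\varphi'(cj)$ and cancelling the common factor $\varphi'(j)$, what must be proved is
\[
\varphi'\big(b_0x_1b_1x_2\cdots x_nb_n\,j\big)=\Big(\prod_{i=0}^{n}\varphi(b_i)\Big)\,\varphi'\big(x_1x_2\cdots x_n\,j\big)
\]
for all $x_1,\dots,x_n\in\mathcal{J}_a(\mathcal{A})$ and $b_0,\dots,b_n\in\mathcal{B}$, where $b_0$ and $b_n$ may equal $1$. By multilinearity of $\varphi'$ I may take each $x_\ell$ to be one of the reduced words $j^{(\epsilon_1)}a_1j^{(\epsilon_2)}\cdots j^{(\epsilon_{r-1})}a_{r-1}j^{(\epsilon_r)}$ with $r\ge2$, $a_i\in\mathcal{A}$, $\epsilon_i\in\{\pm1\}$ and $\epsilon_i\ne\epsilon_{i+1}$ (these span $\mathcal{J}_a(\mathcal{A})$, pairwise products of such words being again such words or $0$). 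The structural observation I use repeatedly is that, the signs being at least two in number and strictly alternating, every reduced word contains at least one factor $j^{(-1)}=j$.

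The heart of the argument is a \emph{key lemma}: for $d_0,\dots,d_m\in\mathcal{D}$ with $m\ge1$, $\varphi(d_0jd_1j\cdots jd_m)=0$ and
\[
\varphi'(d_0jd_1j\cdots jd_m)=\varphi'(j)\,\varphi(d_0d_m)\prod_{i=1}^{m-1}\varphi(d_i).
\]
The first identity holds because $j^2=j$ and $\varphi(j^k)=0$ give $j$ the $\varphi$-distribution of $0$, so, $j$ being free from $\mathcal{D}$, any mixed moment containing a $j$ vanishes. For the second I would use the description of infinitesimal freeness by a one-parameter family $\varphi_t$ of states with $\varphi_0=\varphi$, $\frac{d}{dt}\big|_{0}\varphi_t=\varphi'$, and $\{j\}$ free from $\mathcal{D}$ with respect to every $\varphi_t$; then $j$ stays idempotent and $\tau(t):=\varphi_t(j)$ satisfies $\tau(0)=0$, $\tau'(0)=\varphi'(j)$. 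Expanding $\varphi_t(d_0jd_1j\cdots jd_m)$ over non-crossing partitions, mixed cumulants vanish and each free cumulant of the idempotent $j$ equals $\tau(t)$, so a partition with $b$ blocks among the $m$ $j$-positions contributes $\tau^b$; the coefficient of $\tau^1$ comes precisely from the partitions placing all $m$ $j$-positions in one block, which forces $d_1,\dots,d_{m-1}$ to be singletons and $d_0,d_m$ to form either a pair or two singletons, giving $\varphi_t(d_0d_m)\prod_{i=1}^{m-1}\varphi_t(d_i)$. Differentiating at $t=0$, where $\tau(0)=0$ kills every $\tau^{\ge2}$ term, yields the formula.

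For the main step I would expand each factor $j^{(1)}=1-j$ in $b_0x_1b_1\cdots x_nb_n\,j$, writing the result as a signed sum $\sum_T(-1)^{|T|}W_T$ over the subsets $T$ of the set of $(+1)$-slots occurring among the $x_\ell$; deleting the $b$'s from the very same expansion writes $x_1\cdots x_n\,j=\sum_T(-1)^{|T|}W_T'$ over the same index set. Every $W_T$ ends with the appended $j$, so the key lemma applies with trailing $d$-block equal to $1$ and gives $\varphi'(W_T)=\varphi'(j)\,\varphi(D_0)\prod_{i\ge1}\varphi(D_i)$, where $D_0,D_1,\dots$ are the maximal runs of letters cut out by the $j$'s of $W_T$. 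Here the structural observation enters: each $x_\ell$ carries a genuine $j$, so there is a $j$ between any two consecutive $b$'s as well as before the first and after the last, whence every block $D_i$ contains at most one letter from $\mathcal{B}$; ordinary freeness of $\mathcal{A},\mathcal{B}$ with respect to $\varphi$ then gives $\varphi(D_i)=\varphi(b_\ell)\varphi(\widehat D_i)$ if $D_i$ carries $b_\ell$ and $\varphi(D_i)=\varphi(\widehat D_i)$ otherwise, with $\widehat D_i\in\mathcal{A}$ the $\mathcal{A}$-part of $D_i$. Pulling out the $n+1$ scalars $\varphi(b_0),\dots,\varphi(b_n)$, and noting that erasing the $b$'s turns $\varphi(D_0)\prod_{i\ge1}\varphi(D_i)$ into the corresponding product for $W_T'$ (blocks with $\widehat D_i=1$ merely merge adjacent $j$'s and contribute $1$ to each product), one gets $\varphi'(W_T)=\big(\prod_{\ell}\varphi(b_\ell)\big)\varphi'(W_T')$; summing over $T$ and dividing by $\varphi'(j)$ gives the displayed identity, hence the theorem.

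I expect the bookkeeping of this last step to be the main obstacle: one must track simultaneously the $1-j$-expansion, the blockwise application of the key lemma, and the factorization of the $\mathcal{B}$-moments, and in particular justify carefully that the ``at most one $\mathcal{B}$-letter per block'' phenomenon — which rests on every reduced word of $\mathcal{J}_a(\mathcal{A})$ containing a true factor $j$ — forces the $\mathcal{B}$-contribution to be exactly $\prod_\ell\varphi(b_\ell)$ while the $\mathcal{A}$-part reassembles into $\psi(x_1\cdots x_n)$.
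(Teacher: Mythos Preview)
The paper does not prove this theorem; it is quoted verbatim from \cite[Theorem~5.5]{MT23} as background for the random-matrix applications, so there is no in-paper proof to compare against. Your overall strategy is the right one and matches how such results are typically established: the key lemma
\[
\varphi'(d_0jd_1j\cdots jd_m)=\varphi'(j)\,\varphi(d_0d_m)\prod_{i=1}^{m-1}\varphi(d_i),\qquad d_i\in\mathcal{D}=\mathrm{alg}(\mathcal{A},\mathcal{B}),
\]
is correct, and once it is in hand your reduction (expand each $j^{(1)}=1-j$, apply the lemma termwise, then use freeness of $\mathcal{A}$ and $\mathcal{B}$ to pull out the single $b_\ell$ occurring in each $\mathcal{D}$-block via $\varphi(a'b_\ell a'')=\varphi(b_\ell)\varphi(a'a'')$) does reassemble into $\big(\prod_\ell\varphi(b_\ell)\big)\varphi'(x_1\cdots x_n j)$.

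Two points need tightening. First, the derivation of the key lemma via a one-parameter family $\varphi_t$ is only heuristic: an abstract infinitesimal probability space need not arise as the derivative of an honest family of states with the required freeness at every $t$. The lemma should be proved directly from the infinitesimal moment--cumulant relation $\varphi'(c_1\cdots c_n)=\sum_{\pi\in NC(n)}\sum_{V\in\pi}\kappa'_V\prod_{W\ne V}\kappa_W$: since $\kappa_r(j,\dots,j)=0$ for all $r$ (as $\varphi(j^k)=0$) while $\kappa'_r(j,\dots,j)=\varphi'(j)$, the only surviving $\pi$ puts all $m$ copies of $j$ in a single primed block, which forces interior $d_i$ to be singletons and leaves $d_0,d_m$ either as singletons or paired --- giving exactly $\varphi(d_0d_m)\prod_{i=1}^{m-1}\varphi(d_i)$. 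Second, your claim that the reduced words with $r\ge 2$ span $\mathcal{J}_a(\mathcal{A})$ is not literally compatible with the paper's definition, which allows $k\ge 0$; in particular $j,\,1-j$ (hence $1$) lie in $\mathcal{J}_a(\mathcal{A})$, and for $x_\ell=1-j$ the ``$1$'' branch of your expansion places $b_{\ell-1}b_\ell$ in the same $\mathcal{D}$-block, so the factorization $\varphi(b_{\ell-1})\varphi(b_\ell)$ fails. This is really a wrinkle in how $\mathcal{J}_a(\mathcal{A})$ is written down here rather than in your method --- the substantive case is $k\ge 2$, where each generator carries a genuine $j^{(-1)}$ separating consecutive $b$'s --- but you should flag the discrepancy rather than asserting the span.
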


 Let us recall a natural framework of an infinitesimal $*$-probability space for random matrices. Let $A_N^{(1)},\dots,A_N^{(k)}$ be $N\times N$ random matrices, and consider the linear map $\varphi_N$ on $\mathbb{C}\langle X_1,\dots,X_k\rangle$, the algebra of polynomials in $k$ non-commuting indeterminates,  defined by
$$
\varphi_N(p)= \frac{1}{N} (\E\circ Tr_N)\big(p(A_N^{(1)},\dots,A_N^{(k)})\big). 
$$
If  for all $p\in \mathbb{C}\langle X_1,\dots,X_k\rangle$,  $\varphi(p):=\lim_{N\to\infty}\varphi_N(p)$ exists as well as
$\phi'(p)=\lim_{N\to\infty}\phi_N'(p)$ where
$\varphi_N'(p):= N[\varphi_N(p)-\varphi(p)]$, then $(\varphi,\varphi')$ is said to be the asymptotic infinitesimal law of $\{A_N^{(1)},\dots,A_N^{(k)}\}$. 

 We recall now real and complex Wigner matrices and consider  certain assumptions on their moments. We will  provide applications of our main results to polynomials in such matrices in Section \ref{Section: RM example}. We denote by $W_N$ the $N \times N$ Hermitian matrix defined by $W_N=\frac{1}{\sqrt{N}}(W_{i,j}^{(N)})_{1\leq i, j \leq N}$ with $W_{i,j}^{(N)}=\widebar{W}_{j,i}^{(N)}$ for $j>i$. For each $N \in \mathbb{N}$, $(W_{i,j}^{(N)})_{1\leq i\leq j \leq N}$  is a family of independent random variables such that the diagonal entries are real-valued centered random variables with common variance, say $s^2$. For real Wigner matrices, we assume that off-diagonal entries are real-valued variables satisfying for all $i<j $:
\[
\E[W_{i,j}^{(N)}]=0 \quad \text{and} \quad  \E[|W_{i,j}^{(N)}|^2]= \sigma^2.% \E[|W_{i,j}^{(N)}|^4]= \alpha 
\]
For the complex Wigner matrices, we assume that the off-diagonal entries are complex-valued and satisfy, in addition to the above conditions,  $\E[(W_{i,j}^{(N)})^2]=0$ for all $i<j$.
 It is well-known that the limiting spectral distribution of a  Wigner matrix is the semicircular distribution. Since the latter is compactly supported, this is equivalent to the convergence of the moments, i.e., for all $k\geq 1$,  
$$
\lim_{N\to\infty}\frac{1}{N}(\E\circ Tr_N)(W_N^{k}) = \begin{cases}
    C_{m} & \text{ if }k=2m \text{ is even}, \\
    0 & \text{ if }k \text{ is odd,}
\end{cases}
$$
where $C_m$ is the $m$-th Catalan number. Moreover, a family of independent real or complex Wigner matrices is known to be asymptotically free with respect to $\varphi_N$, see for instance \cite{AGZ}[Theorem 5.4.2]. Now  assuming furthermore that for $i<j$, $\E[|W_{i,j}^{(N)}|^4]=\alpha$ and 
\[
\sup_{N\in \mathbb{N}} \sup_{1 \leq i\leq k \leq N} \E[|W_{i,j}^{(N)}|^\ell]<\infty \quad \text{for all }\quad \ell \in \mathbb{N},
\]
then Au \cite{Au21} proved that that $W_N$ has an infinitesimal distribution generalizing the result of \cite{NL16} for matrices whose entries are identically distributed. Moreover, Au proves that a family of independent Wigner matrices satisfying the above moment conditions is asymptotically infinitesimally free from finite rank matrices. Indeed, Lemma 3.4 and Corollary 3.11 in \cite{Au21} yield the following:
\begin{thm}\label{thm:inf free - WEJ} 
Let $(W_N^{(i)})_{i\in I}$ be a family of independent $N\times N$ real or complex Wigner matrices satisfying the above moment conditions and denote by $E_{N}^{(j,k)}$ the unit matrix in the $(j,k)$-th coordinate. Then for any fixed $N_0$, $(W_N^{(i)})_{i\in I}$ and $(E_N^{(j,k)})_{1 \leq j,k \leq N_0}$ are asymptotically infinitesimally free  with respect to $(\phi_N,\phi_N')$. 
\end{thm}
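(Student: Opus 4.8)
The plan is to reduce the statement to results already available in the literature, specifically Lemma~3.4 and Corollary~3.11 of Au~\cite{Au21}, which the excerpt explicitly invokes. First I would recall that each unit matrix $E_N^{(j,k)}$ with $1\le j,k\le N_0$ is a finite-rank matrix (rank at most one), and that the algebra generated by $\{E_N^{(j,k)}\}_{1\le j,k\le N_0}$ is precisely the (constant, nonrandom) $N_0\times N_0$ matrix algebra embedded in the top-left corner of $M_N(\mathbb{C})$, hence a family of deterministic finite-rank matrices. So the content to be proved is that a family of independent Wigner matrices satisfying the stated moment conditions is asymptotically infinitesimally free from any fixed finite-rank deterministic matrices, with respect to $(\varphi_N,\varphi_N')$.

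The key steps, in order, would be: (1) verify that the Wigner matrices $(W_N^{(i)})_{i\in I}$ have a joint asymptotic infinitesimal distribution with respect to $(\varphi_N,\varphi_N')$ — this is exactly Au's generalization (building on \cite{NL16}) cited just above the theorem, using the fourth-moment assumption $\E[|W_{i,j}^{(N)}|^4]=\alpha$ and the uniform moment bounds; (2) invoke the genuine asymptotic freeness of independent Wigner matrices with respect to $\varphi_N$, recalled from \cite{AGZ}; (3) apply Au's \cite[Lemma~3.4, Cor.~3.11]{Au21} which upgrades this to asymptotic infinitesimal freeness when one adjoins a fixed finite-rank matrix (or a finite family thereof, by an immediate induction), noting that the relevant hypothesis there is precisely the moment control we have assumed; and (4) identify the subalgebra generated by $\{E_N^{(j,k)}\}_{1\le j,k\le N_0}$ as a single finite-rank algebra so that the finite-rank result applies verbatim. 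A small technical point to check in step (4) is that products $E_N^{(j,k)}E_N^{(k',l)} = \delta_{k,k'}E_N^{(j,l)}$ stay within the same finite-rank corner, so no new ranks accumulate and $N_0$ stays fixed as $N\to\infty$.

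I do not expect a genuine obstacle here, since the theorem is stated as a direct consequence of cited results; the work is bookkeeping. The one place requiring mild care is making sure the \emph{infinitesimal} functional $\varphi_N'(p) = N[\varphi_N(p)-\varphi(p)]$ is the one appearing in Au's corollaries (Au uses exactly this $N$-rescaling of the difference from the limit), so that "asymptotically infinitesimally free with respect to $(\varphi_N,\varphi_N')$" in our statement matches his conclusion without any reindexing or renormalization. Once that identification is made, the proof is: apply Theorem-level input (1)--(2), then Au's Lemma~3.4 and Corollary~3.11 to conclude, which is precisely how the excerpt already advertises it. Hence the proof is essentially a citation with the finite-rank identification spelled out, and the main "difficulty" is simply confirming the hypotheses of \cite[Lemma~3.4, Cor.~3.11]{Au21} are met by our moment assumptions — which they are by construction.
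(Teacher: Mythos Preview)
Your proposal is correct and matches the paper's approach exactly: the paper offers no proof beyond the sentence ``Indeed, Lemma 3.4 and Corollary 3.11 in \cite{Au21} yield the following,'' so the theorem is stated purely as a citation. Your write-up in fact supplies more detail than the paper does, spelling out the finite-rank identification and the matching of hypotheses that the paper leaves implicit.
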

We note that in a more recent paper, Au proves asymptotic infinitesimal freeness for a more general class of matrices, see \cite[Proposition 3.6]{Au23}.

It is noteworthy that the limit operators of  $(E_N^{(j,j)})_{1\leq j\leq N_0}$ are infinitesimal idempotents. 
Consequently, we can construct many asymptotically monotone independent random matrix models by combining Theorem \ref{thm: Inf idem to M} and Theorem \ref{thm:inf free - WEJ}. In particular, choose $A_N$ and $B_N$ to be asymptotically free and asymptotically infinitesimally free from
$j_N=\sum_{j=1}^{N_0}E_{N}^{j,j}$. It is easy to see that 
$\psi_N(\cdot)=\frac{1}{\varphi_N'(j_N)}\varphi_N'(\cdot j_N)$ is the partial trace in  \eqref{formula: partial-trace} and that 
$T_{A_N}=j_NA_N(1-j_N)+(1-j_N)A_N j_N.$  This yields that $(T_{A_N}, B_N)$ is asymptotically monotone independent with respect to the partial trace. Moreover, the limiting distribution of $T_{A_N}$ with respect to $\psi_N$ can then be computed following \cite[Proposition 5.3]{MT23} which, whenever $A_N$ and $j_N$ are asymptotically infinitesimally free, yields that 
\begin{eqnarray}\label{moments T_A_N}
\lim_{N\to \infty}\psi_N(T_{A_N}^k) 
&=& 
\begin{cases}\Big[\lim\limits_{N\to \infty}\big( (\E\circ Tr_N)(A_N^2)-(\E\circ Tr_N)(A_N)^2\big)\Big]^{k/2} & \text{ if }  k \text{ is even,} \\
            0  & \text{ if }  k \text{ is odd,} 
        \end{cases}
\end{eqnarray}
where $tr_N$ denotes the normalized trace.

\section{Distribution of Polynomials in Monotone Independent Elements}\label{Section monotone polynomials}

In this section, we extend the approach  in \cite{AC21} to the monotone setting and study distributions of polynomials in monotone independent elements. For two monotone independent elements $a$ and $b$, we start by studying distributions of linear spans of $ab$ and $ba$ for which we derive explicit expressions. As particular cases, we determine the distributions of the commutator and anti-commutator. Then, in Remark \ref{remark general poly}, we discuss more general polynomials in monotone independent elements.

\medskip
\noindent
\paragraph{\bf The Linear Span of $ab$ and $ba$}
Let $(\mathcal{C},\phi)$ be a $*$-probability space. For any $a$ and $b$ in $\cC$, we denote by $p_{\alpha,\beta}:=p_{\alpha,\beta}(a,b)$, the linear span of $ab$ and $ba$, given by $p_{\alpha,\beta}=\alpha ab+\beta ba$ for some non-zero complex numbers $\alpha$ and $\beta$. 

 We recall that for a given $x\in\cA$, we set $\cA_x$ to be the $*$-algebra generated by $x$, and $\mathring{\cA}_x$ to be the $*$-algebra generated by $x$ and $1$.  
\begin{thm}\label{thm: M-linear span}
 Assume that $\cA_a\prec \mathring{\mathcal{A}}_b$, then for each $k\geq 1$, we have for any $k \in \mathbb{N}$,
    \begin{eqnarray*}
\varphi(p_{\alpha,\beta}^k)=\frac{\varphi(a^k)}{2^{k+1}\gamma} \left[\left((\alpha+\beta)x+\gamma\right)^{k+1}-\left((\alpha+\beta)x-\gamma\right)^{k+1} \right]
    \end{eqnarray*}
where $x=\varphi(b)$ and $\gamma=\sqrt{(\alpha-\beta)^2\varphi(b)^2+4\alpha\beta\varphi(b^2)}$.
\end{thm}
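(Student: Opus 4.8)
The plan is to expand $p_{\alpha,\beta}^{k}$ into a sum of words in $a,b$, evaluate each word by the monotone rule $\cA_a\prec\mathring{\cA}_b$, and then resum the resulting scalar quantity via a two-term linear recursion whose characteristic discriminant turns out to be exactly $\gamma^{2}$. Writing $p_{\alpha,\beta}^{k}=\sum_{\epsilon\in\{1,2\}^{k}}c_{\epsilon}\,w_{\epsilon}$ with $w_{\epsilon}=w_{\epsilon_{1}}\cdots w_{\epsilon_{k}}$, $w_{1}=ab$, $w_{2}=ba$, and $c_{\epsilon}=\alpha^{n_{1}(\epsilon)}\beta^{n_{2}(\epsilon)}$ (where $n_{i}(\epsilon)=\#\{j:\epsilon_{j}=i\}$), each word $w_{\epsilon}$ is an alternating product of maximal $a$-blocks and $b$-blocks containing exactly $k$ letters $a$ and $k$ letters $b$; it is of the form $b_{0}a_{1}b_{1}\cdots a_{n}b_{n}$ permitted in the definition of $\cA_a\prec\mathring{\cA}_b$ (with $b_{0}$ or $b_{n}$ possibly $1$, each $a_{i}$ a power of $a$, each interior $b_{j}$ a power of $b$), so that $\varphi(w_{\epsilon})=\varphi(a^{k})\prod_{j}\varphi(b^{d_{j}})$ where the $d_{j}$ are the sizes of the $b$-blocks. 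The decisive observation is that every $b$-block has size $1$ or $2$: two $b$'s become adjacent precisely at a seam $w_{i}w_{i+1}$ with $\epsilon_{i}=1$ and $\epsilon_{i+1}=2$, and three $b$'s can never meet, as that would force $\epsilon_{i+1}$ to equal both $2$ and $1$. Hence, with $m(\epsilon)$ the number of such seams, $w_{\epsilon}$ has $m(\epsilon)$ blocks of size $2$ and $k-2m(\epsilon)$ of size $1$, giving $\varphi(w_{\epsilon})=\varphi(a^{k})\,\varphi(b^{2})^{m(\epsilon)}\varphi(b)^{k-2m(\epsilon)}$ and therefore
$$\varphi(p_{\alpha,\beta}^{k})=\varphi(a^{k})\,T_{k},\qquad T_{k}:=\sum_{\epsilon\in\{1,2\}^{k}}\alpha^{n_{1}(\epsilon)}\beta^{n_{2}(\epsilon)}\varphi(b^{2})^{m(\epsilon)}\varphi(b)^{k-2m(\epsilon)}.$$

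To evaluate $T_{k}$ I would condition on the first one or two coordinates of $\epsilon$; writing $x=\varphi(b)$, $y=\varphi(b^{2})$, and $T_{k}=T_{k}^{(1)}+T_{k}^{(2)}$ according to $\epsilon_{1}$: if $\epsilon_{1}=2$, the first $b$ is an isolated block and the seam $w_{1}w_{2}$ cannot merge, so $T_{k}^{(2)}=\beta x\,T_{k-1}$; if $\epsilon_{1}=1$, split on $\epsilon_{2}$, getting $\alpha x\,T_{k-1}^{(1)}$ when $\epsilon_{2}=1$ (first $b$ isolated, tail a copy of the same problem begun with $\epsilon=1$) and $\alpha\beta y\,T_{k-2}$ when $\epsilon_{2}=2$ (first two $b$'s a size-$2$ block, seam $w_{2}w_{3}$ cannot merge). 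Eliminating $T_{k}^{(1)},T_{k}^{(2)}$ yields
$$T_{k}=(\alpha+\beta)x\,T_{k-1}-\alpha\beta(x^{2}-y)\,T_{k-2}\qquad(k\ge 2),\qquad T_{0}=1,\ T_{1}=(\alpha+\beta)x.$$
The characteristic polynomial $t^{2}-(\alpha+\beta)x\,t+\alpha\beta(x^{2}-y)$ has discriminant $(\alpha-\beta)^{2}x^{2}+4\alpha\beta y=\gamma^{2}$ and roots $\lambda_{\pm}=\tfrac12\big((\alpha+\beta)x\pm\gamma\big)$, so with the given initial data $T_{k}=(\lambda_{+}^{k+1}-\lambda_{-}^{k+1})/(\lambda_{+}-\lambda_{-})$; since $\lambda_{+}-\lambda_{-}=\gamma$ and $\lambda_{\pm}^{k+1}=((\alpha+\beta)x\pm\gamma)^{k+1}/2^{k+1}$, multiplying by $\varphi(a^{k})$ gives exactly the asserted formula. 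The degenerate case $\gamma=0$ follows by continuity, since both sides are polynomials in $x$ and $y$.

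The main obstacle is the bookkeeping in the first step: one must verify carefully that every word in the expansion is of the monotone type $b_{0}a_{1}b_{1}\cdots a_{n}b_{n}$, that $b$-blocks never exceed size $2$, and that size-$2$ blocks are in bijection with the ``$\epsilon_{i}=1,\epsilon_{i+1}=2$'' seams, all while tracking the coefficients $\alpha,\beta$ correctly; once the reduction to the scalar sum $T_{k}$ is in place, setting up and solving the recursion is routine and the shape of $\gamma$ emerges automatically as the square root of the discriminant. This is the monotone counterpart of the matrix-entry method of \cite{AC21}: the companion matrix $R$ of the recursion — equivalently, for $x\ne0$, the transfer matrix $\left[\begin{smallmatrix}\alpha x&\beta y/x\\ \alpha x&\beta x\end{smallmatrix}\right]$, whose eigenvalues are $\lambda_{\pm}$ — plays the role of their product matrix, and $\varphi(p_{\alpha,\beta}^{k})=\varphi(a^{k})\,(R^{k})_{1,1}$ is read off from its $k$-th power.
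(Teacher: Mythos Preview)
Your proof is correct, but it proceeds by a genuinely different route from the paper's. The paper realizes $p_{\alpha,\beta}$ as the $(1,1)$-entry of a product of $2\times2$ matrices over $\cC$: with
\[
B_0=\begin{bmatrix}\alpha & b\\ 0 & 0\end{bmatrix},\quad A=\begin{bmatrix}a&0\\0&a\end{bmatrix},\quad B_1=\begin{bmatrix}b&0\\\beta&0\end{bmatrix},
\]
one has $B_0AB_1=\mathrm{Diag}(p_{\alpha,\beta},0)$, and then Proposition~\ref{prop: Tr-monotone} (the ``trace--monotone'' lemma) lets one replace each $B$-matrix by its entrywise $\varphi$-image. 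The computation thus reduces to diagonalizing the scalar matrix $\left[\begin{smallmatrix}\alpha x & y\\ \alpha\beta & \beta x\end{smallmatrix}\right]$, whose characteristic polynomial is exactly your $t^{2}-(\alpha+\beta)x\,t+\alpha\beta(x^{2}-y)$. You instead expand $p_{\alpha,\beta}^{k}$ combinatorially, apply the monotone rule word by word, exploit the observation that $b$-blocks have size at most $2$, and set up a two-term linear recursion in $T_{k}$; the same quadratic then appears as the characteristic equation of the recursion. Your approach is more elementary in that it avoids the matrix machinery and Proposition~\ref{prop: Tr-monotone} altogether, at the cost of the block-size bookkeeping you flag; the paper's approach is more systematic and scales readily to other polynomials (as in Proposition~\ref{prop: law of general P}) and to the infinitesimal setting (Section~\ref{Section inf linear span}), where the same matrix identities are reused over $\widetilde{\bC}$. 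Your closing remark identifying the companion/transfer matrix with the paper's product matrix makes the equivalence of the two computations explicit.
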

\begin{rem}\label{rem: dist of span}
Note that by monotone independence, the $*$-distribution of $p_{\alpha,\beta}$ depends on the distribution of $b$ only through its first and second moments $\phi(b)$ and $\phi(b^2)$. Now, if we assume that $\phi(b)=0$, then for any $k \geq 1$,
$$
\phi(p_{\alpha,\beta}^k)=\frac{1}{2}\sqrt{\alpha\beta \phi(b^2)}^k\phi(a^k)+\frac{(-1)^k}{2}\sqrt{\alpha\beta \phi(b^2)}^k\phi(a^k)=\begin{cases}
            \sqrt{\alpha\beta \phi(b^2)}^k\phi(a^k) & \text{ if }  k \text{ is even,} \\
            0  & \text{ if }  k \text{ is odd.} 
        \end{cases}$$
Furthermore, if $a$ is an even operator, then we have the $*$-distribution of $p_{\alpha,\beta}$ is a mere scaling of the distribution of $a$. 

\end{rem}

Note that, as a direct consequence, we obtain the $*$-distribution of anti-commutators and commutators in monotone independent variables by taking $(\alpha,\beta)=(1,1)$ or $(\alpha,\beta)=(i,-i)$ respectively. More precisely, we obtain the following moment expressions.
\begin{cor}\label{cor: anti & comm - M}
Consider $a$, $b \in \cC$  such that $\cA_a\prec \mathring{\cA}_b$. Let $p=ab+ba$ and $q=i(ab-ba)$, then for each $k\geq 1,$
\begin{eqnarray*}
\varphi(p^k)&=& \frac{1}{2\sqrt{\varphi(b^2)}}\left([\varphi(b)+\sqrt{\varphi(b^2)}]^{k+1}-[\varphi(b)-\sqrt{\varphi(b^2)}]^{k+1}\right)\varphi(a^k); \\
\varphi(q^k)&=& \frac{1}{2}\left(1-(-1)^{k+1}\right)\left(\sqrt{\varphi(b^2)-\varphi(b)^2}\right)^{k}\varphi(a^k).
\end{eqnarray*}
\end{cor}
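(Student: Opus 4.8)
The corollary is the specialization of Theorem~\ref{thm: M-linear span} to the two parameter choices $(\alpha,\beta)=(1,1)$, which gives the anti-commutator $p=ab+ba=p_{1,1}$, and $(\alpha,\beta)=(i,-i)$, which gives the commutator $q=i(ab-ba)=p_{i,-i}$. So the plan is simply to grant Theorem~\ref{thm: M-linear span} and compute, in each case, what $(\alpha+\beta)\varphi(b)$ and $\gamma=\sqrt{(\alpha-\beta)^2\varphi(b)^2+4\alpha\beta\varphi(b^2)}$ become, then cancel the powers of $2$. For the anti-commutator we have $\alpha-\beta=0$, $\alpha+\beta=2$ and $4\alpha\beta=4$, so $(\alpha+\beta)\varphi(b)=2\varphi(b)$ and $\gamma=2\sqrt{\varphi(b^2)}$. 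Since $\bigl(2\varphi(b)\pm 2\sqrt{\varphi(b^2)}\bigr)^{k+1}=2^{k+1}\bigl(\varphi(b)\pm\sqrt{\varphi(b^2)}\bigr)^{k+1}$, the factor $2^{k+1}$ cancels the $2^{k+1}$ in the denominator coming from Theorem~\ref{thm: M-linear span}, leaving the prefactor $\tfrac{1}{2\sqrt{\varphi(b^2)}}$, which is exactly the claimed expression for $\varphi(p^k)$.

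For the commutator we have $\alpha+\beta=0$, so the linear term $(\alpha+\beta)\varphi(b)$ vanishes; moreover $(\alpha-\beta)^2=(2i)^2=-4$ and $4\alpha\beta=4$, so $\gamma=2\sqrt{\varphi(b^2)-\varphi(b)^2}$. Substituting $(\alpha+\beta)\varphi(b)=0$ into Theorem~\ref{thm: M-linear span} gives
\[
\varphi(q^k)=\frac{\varphi(a^k)}{2^{k+1}\gamma}\bigl(\gamma^{k+1}-(-\gamma)^{k+1}\bigr)=\frac{\varphi(a^k)}{2^{k+1}}\,\gamma^{k}\bigl(1-(-1)^{k+1}\bigr),
\]
and then $\gamma^{k}=2^{k}\bigl(\sqrt{\varphi(b^2)-\varphi(b)^2}\bigr)^{k}$ produces the stated $\tfrac12\bigl(1-(-1)^{k+1}\bigr)\bigl(\sqrt{\varphi(b^2)-\varphi(b)^2}\bigr)^{k}\varphi(a^k)$, which is $0$ for odd $k$ and $\bigl(\varphi(b^2)-\varphi(b)^2\bigr)^{k/2}\varphi(a^k)$ for even $k$.

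Since the corollary is a pure substitution there is no genuine obstacle here beyond keeping track of the powers of $2$ and the elementary identity $(A+B)^{k+1}-(A-B)^{k+1}=2B\cdot(\text{polynomial in }A\text{ and }B^2)$, which also shows both right-hand sides are honest polynomials in $\varphi(a^k),\varphi(b),\varphi(b^2)$ with no choice of square-root branch. The substantive work lives in Theorem~\ref{thm: M-linear span} itself; to prove that I would expand $p_{\alpha,\beta}^{k}$ over the $2^{k}$ words obtained by choosing $ab$ or $ba$ in each of the $k$ factors, use $\cA_a\prec\mathring{\cA}_b$ to factor each resulting moment as $\varphi(a^k)$ times the product of the moments $\varphi(b^{\ell})$ of the maximal $b$-blocks of that word, observe that these blocks have length at most $2$ (so only $\varphi(b)$ and $\varphi(b^2)$ ever appear), and then recognize the weighted sum over the $2^{k}$ choices as $\ell\,T^{k-1}r$ for a $2\times2$ transfer matrix $T$ with $\mathrm{tr}\,T=(\alpha+\beta)\varphi(b)$ and $\det T=\alpha\beta\bigl(\varphi(b)^2-\varphi(b^2)\bigr)$, whose eigenvalues are precisely $\tfrac12\bigl((\alpha+\beta)\varphi(b)\pm\gamma\bigr)$; Cayley--Hamilton then turns $\ell\,T^{k-1}r$ into the closed form of the theorem. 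The one delicate point in that argument is reading off the $b$-block lengths at the junctions between consecutive $ab$/$ba$ factors and handling empty $b$-blocks (i.e.\ consecutive $a$'s), which is cleanest if one first rewrites each word in the canonical form $b^{m_0}a^{n_1}b^{m_1}\cdots a^{n_p}b^{m_p}$ before invoking the monotone factorization rule.
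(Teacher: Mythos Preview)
Your proposal is correct and matches the paper's approach exactly: the paper also derives the corollary as a direct consequence of Theorem~\ref{thm: M-linear span} by substituting $(\alpha,\beta)=(1,1)$ and $(\alpha,\beta)=(i,-i)$, with no separate argument given. Your additional sketch of a transfer-matrix proof of Theorem~\ref{thm: M-linear span} is close in spirit to the paper's own proof (which writes $p_{\alpha,\beta}$ as the $(1,1)$-entry of a $2\times2$ matrix product and diagonalizes the resulting scalar $2\times2$ matrix), but that is not needed for the corollary itself.
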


In order to derive explicit expressions for the distributions of spans of $ab$ and $ba$, we prove first the following proposition that plays a crucial role in bridging the gap between matrices with monotone entries and the distribution of polynomials. For a given algebra $\mathcal{D}$, we represent the unital $*$-algebra generated by $\mathcal{D}$ as $\mathring{\mathcal{D}}$.

\begin{prop}\label{prop: Tr-monotone}
Let $(\mathcal{C},\varphi)$ be a $*$-probability space. Suppose that $\mathcal{A}$ and $\mathcal{B}$ are $*$-subalgebras of $\mathcal{C}$ such that $\mathcal{A}\prec \mathring{\mathcal{B}}$. Consider $A_k=[a_{i,j}^{(k)}]_{i,j\in [n]}\in M_n(\mathcal{A})$ for each $k=1,\dots,m$ and $B_s=[b_{i,j}^{(s)}]_{i,j\in [n]}\in M_n(\mathcal{B})$ for each $s=0,1,\dots,m$. Then we have 
\begin{equation*}
(Tr_n\otimes \varphi)(D_0A_1B_1A_2B_2\cdots B_{m-1}A_mD_m)=(Tr_n\otimes \varphi)(D_0'A_1B_1'A_2B_2'\cdots B_{m-1}'A_mD_m')
\end{equation*}
where $D_0=I_n $ or $B_0$, $D_m=I_n$ or $B_m$, 
$
B_k'=\left(\varphi(b_{i,j}^{(k)})_{i,j\in [n]}\right)\in M_n(\mathbb{C}),
$ for each $k$, and $D_0'=I_n$ or $B_0'$, $D_m'=I_n$ or $B_m'$.
\end{prop}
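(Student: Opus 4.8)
The plan is to expand the matrix product entrywise and reduce every scalar moment that appears to an application of the defining identity for $\mathcal{A}\prec\mathring{\mathcal{B}}$. Writing out the left-hand side, $(Tr_n\otimes\varphi)(D_0A_1B_1\cdots B_{m-1}A_mD_m)$ is a sum, over all index tuples $(i_0,i_1,\dots,i_m)\in[n]^{m+1}$ compatible with the trace (so the first and last indices coincide), of terms of the form
\begin{equation*}
\varphi\bigl(d^{(0)}_{\ast,i_0}\,a^{(1)}_{i_0,i_1}\,b^{(1)}_{i_1,i_1'}\,a^{(2)}_{\dots}\cdots b^{(m-1)}_{\dots}\,a^{(m)}_{\dots,i_m}\,d^{(m)}_{i_m,\ast}\bigr),
\end{equation*}
where each $a^{(k)}_{\cdot,\cdot}\in\mathcal{A}$ and each $b^{(s)}_{\cdot,\cdot}\in\mathcal{B}$, and the $d$-factors are either the identity (contributing nothing) or entries of $B_0$, $B_m$. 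The key observation is that each such $\varphi(\cdot)$ has exactly the alternating shape $\varphi(b_0 a_1 b_1 \cdots a_m b_m)$ appearing in the definition of monotone independence, with the $a$'s drawn from $\mathcal{A}$ and the $b$'s from $\mathcal{B}\cup\{1\}$; hence it factors as $\varphi(a_1a_2\cdots a_m)\,\varphi(b_0)\varphi(b_1)\cdots\varphi(b_m)$.

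Next I would perform the same expansion on the right-hand side. There the $B_k'$ are scalar matrices, so each scalar factor coming from a $B_k'$-entry is literally the number $\varphi(b^{(k)}_{i,j})$ and can be pulled out of $\varphi$; what remains inside $\varphi$ is just $a^{(1)}_{\cdot,\cdot}a^{(2)}_{\cdot,\cdot}\cdots a^{(m)}_{\cdot,\cdot}$, contributing $\varphi(a^{(1)}_{i_0,i_1}\cdots a^{(m)}_{\cdot,i_m})$ times a product of the scalars $\varphi(b^{(k)}_{\cdot,\cdot})$ (and the scalars coming from $D_0',D_m'$ when present). Comparing term by term for a fixed index tuple, the two sides agree precisely because of the monotone factorization identity applied on the left: the $\varphi$ of the long alternating word on the left equals the product of the $\varphi$ of the $\mathcal{A}$-word and the $\varphi$'s of the individual $\mathcal{B}$-entries, which is exactly the summand produced on the right. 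Summing over all compatible index tuples then gives the claimed equality.

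The main point requiring care — rather than a genuine obstacle — is bookkeeping of the boundary cases $D_0\in\{I_n,B_0\}$ and $D_m\in\{I_n,B_m\}$, so that the alternating word really does begin and end with a $\mathcal{B}$-or-$1$ factor as the definition demands, and the index contractions at the two ends are handled consistently (in particular the trace identification $i_{\text{first}}=i_{\text{last}}$ must be respected on both sides simultaneously). One should also note that monotone independence of $\mathcal{A}\prec\mathring{\mathcal{B}}$ is exactly what licenses using entries of $\mathring{\mathcal{B}}$, i.e. allowing the boundary factors to be $1$; this is why the hypothesis is phrased with $\mathring{\mathcal{B}}$. Apart from this indexing care the argument is a direct, essentially linear-algebraic, reduction to the defining relation, with no analytic input needed.
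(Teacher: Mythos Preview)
Your proposal is correct and matches the paper's proof essentially verbatim: the paper also expands $(Tr_n\otimes\varphi)(D_0A_1B_1\cdots A_mD_m)$ as a sum over index tuples, applies the monotone factorization $\varphi(b_0a_1b_1\cdots a_mb_m)=\varphi(a_1\cdots a_m)\varphi(b_0)\cdots\varphi(b_m)$ to each summand, and recognizes the result as the entrywise expansion of the right-hand side. The paper handles the boundary cases by fixing one representative choice (e.g.\ $D_0=I_n$, $D_m=B_m$) and remarking that the others are similar, which is exactly the bookkeeping you flag.
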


\begin{proof}
Without loss of generality, we may assume $D_0=I_n$ and $D_m=B_m$ (the proof of all other cases are similar). Note that by monotone independence with respect to $\varphi$, we obtain
\begin{align*}
(Tr_n\otimes \varphi)& (A_1B_1A_2B_2\cdots A_mB_m)\\&= \sum_{i_1,\dots,i_{2m}\in [n]}\varphi\left(a_{i_1,i_2}^{(1)}b_{i_2,i_3}^{(1)}a_{i_3,i_4}^{(2)}b_{i_4,i_5}^{(2)}\cdots a_{i_{2m-1},i_{2m}}^{(m)}b_{i_{2m},i_1}^{(m)}\right) \\
&=  \sum_{i_1,\dots,i_{2m}\in [n]}\varphi\left(a_{i_1,i_2}^{(1)}\varphi(b_{i_2,i_3}^{(1)})a_{i_3,i_4}^{(2)}\varphi(b_{i_4,i_5}^{(2)})\cdots a_{i_{2m-1},i_{2m}}^{(m)}\varphi(b_{i_{2m},i_1}^{(m)})\right) \\
&= (Tr_n\otimes \varphi) (A_1B_1'A_2B_2'\cdots A_mB_m'). 
\end{align*}

\end{proof}

Having this in hand, we are ready to prove the above results.
\begin{proof}[Proof of Theorem \ref{thm: M-linear span}]
Let
$$
B_0=\begin{bmatrix}
\alpha & b \\ 0 & 0
\end{bmatrix},\ A=\begin{bmatrix}a & 0 \\ 0 & a\end{bmatrix}, \text{ and }B_1=\begin{bmatrix} b & 0 \\ \beta & 0
\end{bmatrix}.
$$
Then it is obvious that $B_0AB_1=\diag(\alpha ab+\beta ba,0)$. Note that following Proposition \ref{prop: Tr-monotone}, for any $k\geq 1$, we have
\begin{eqnarray*}
\varphi(p_{\alpha,\beta}^k)&=& (Tr_2\otimes \varphi)\left((B_0AB_1)^k\right) \\
&=& (Tr_2\otimes \varphi) \left( B_0(AB_1B_0)^{k-1}AB_1\right) \\
&=& (Tr_2\otimes \varphi) \left(\begin{bmatrix}
\alpha & \varphi(b) \\ 0 & 0
\end{bmatrix}\left(\begin{bmatrix}
    a & 0 \\ 0 & a
\end{bmatrix}\begin{bmatrix}\alpha\varphi(b) & \varphi(b^2)\\ \alpha\beta & \beta\varphi(b)\end{bmatrix}\right)^{k-1}\begin{bmatrix}
    a & 0 \\ 0 & a
\end{bmatrix}\begin{bmatrix}
    \varphi(b) & 0 \\ \beta & 0
\end{bmatrix}\right) \\
&=& (Tr_2\otimes \varphi)\left(\begin{bmatrix}
    a^k & 0 \\ 0 & a^k 
\end{bmatrix}\begin{bmatrix}
\alpha & \varphi(b) \\ 0 & 0
\end{bmatrix}\left(\begin{bmatrix}\alpha\varphi(b) & \varphi(b^2)\\ \alpha\beta & \beta\varphi(b)\end{bmatrix}\right)^{k-1}\begin{bmatrix}
    \varphi(b) & 0 \\ \beta & 0
\end{bmatrix}\right).
\end{eqnarray*}
We observe that for all $x,y\in\mathbb{C}$, 
$\begin{bmatrix}
    \alpha x & y \\ \alpha\beta & \beta x
\end{bmatrix}=SDS^{-1}$
where 
$$
D=\begin{bmatrix}
    \frac{(\alpha+\beta)x-\gamma}{2} & 0 \\ 0 & \frac{(\alpha+\beta)x+\gamma}{2}
\end{bmatrix}, S=\begin{bmatrix}
    \frac{(\alpha-\beta)x-\gamma}{2\alpha\beta} & \frac{(\alpha-\beta)x+\gamma}{2\alpha\beta} \\ 1 & 1 
\end{bmatrix},\ S^{-1}=\begin{bmatrix}
    \frac{-\alpha\beta}{\gamma} & \frac{\gamma+(\alpha-\beta)x}{2\gamma} \\ 
    \frac{\alpha\beta}{\gamma} & \frac{\gamma-(\alpha-\beta)x}{2\gamma}
\end{bmatrix}
$$
and $\gamma=\sqrt{\varphi(b)^2(\alpha-\beta)^2+4\alpha\beta\varphi(b^2)}$. Then 
\begin{eqnarray*}
    &&\begin{bmatrix}
        \alpha & x \\ 0 & 0 
    \end{bmatrix}\begin{bmatrix}
        \alpha x & y \\ \alpha\beta & \beta x
    \end{bmatrix}^{k-1}\begin{bmatrix}
        x & 0 \\ \beta & 0
    \end{bmatrix} \\
    &=& \begin{bmatrix}
    \frac{(\alpha-\beta)x-\gamma}{2\beta}+x &  \frac{(\alpha-\beta)x+\gamma}{2\beta}+x \\ 0 & 0
\end{bmatrix}\begin{bmatrix}
    \frac{\left((\alpha+\beta)x-\gamma\right)^{k-1}}{2^{k-1}} & 0  \\ 0 &  \frac{\left((\alpha+\beta)x+\gamma\right)^{k-1}}{2^{k-1}}
\end{bmatrix}\begin{bmatrix}
    \frac{-(\beta^2+\alpha\beta)x+\beta\gamma} {2\gamma} & 0 \\ \frac{(\beta^2+\alpha\beta)x+\beta\gamma} {2\gamma} & 0
\end{bmatrix} \\
&=& \frac{1}{2^{k+1}\gamma} \left[\left((\alpha+\beta)x-\gamma\right)^k \left(-(\alpha+\beta)x+\gamma\right)+\left((\alpha+\beta)x+\gamma\right)^k \left((\alpha+\beta)x+\gamma\right) \right]\begin{bmatrix}
    1 & 0 \\ 0 & 0 \end{bmatrix}.
\end{eqnarray*}
Thus, we complete the proof by concluding that
$$
\varphi(p_{\alpha,\beta}^k)
=\frac{\varphi(a^k)}{2^{k+1}\gamma} \left[\left((\alpha+\beta)x+\gamma\right)^{k+1} -\left((\alpha+\beta)x-\gamma\right)^{k+1}  \right].
$$
\end{proof}

\begin{rem}\label{remark general poly}
Let $(\cC,\phi)$ be a $*$-probability space. Let $p$ be any polynomial generated by elements of $\cA$ and $\cB$ with $\cA \prec \mathring{\cB}$. Note that $p$ can be expressed in the form
$$
p=\sum_{j=1}^nb_jy_jb_j'
$$
where $n\geq 1$, $b_1, b_1', \ldots, b_n, b_n' \in alg(\mathcal{B},1_{\cC}), y_1, \ldots, y_n \in \mathcal{Q}$ with $\mathcal{Q}$ being the set defined by   
\begin{equation}\label{class of Poly}
    \mathcal{Q}=\{a_0b_1a_1\cdots b_ka_k \mid k\geq 0, a_0,\dots,a_k\in \cA \text{ and } b_1,\dots,b_k\in\cB \}.
\end{equation} 
To soothe the notation, we write $\Tilde{y}=a_0\phi(b_1)a_1\cdots \phi(b_k)a_k$ whenever $y=a_0b_1a_1\cdots b_ka_k$ for some $a_1,\dots,a_k\in\cA$ and $b_1,\dots,b_k\in \cB$.
Then by monotone independence, we get for any $m \in \mathbb{N}$,
 \begin{equation}\label{eqn: law-general-poly}
     \phi(p^m)=\sum_{j_1,\dots,j_m\in [n]}\phi(\tilde{y}_{j_1}\cdots \tilde{y}_{j_m})\phi(b_{j_1})\phi(b_{j_1}'b_{j_2})\phi(b_{j_2}'b_{j_3})\cdots \phi(b'_{j_{m-1}}b_{j_m})\phi(b_{j_m}').
 \end{equation}  

\end{rem}

We provide in the following proposition an example to demonstrate how to compute the law of a given polynomial. 
\begin{prop}\label{prop: law of general P}
Suppose $(\cC,\varphi)$ is a $*$-probability space, and $a,b_1,b_2,c_1,$ and $c_2$ are elements in $\cC$ such that $(\{a\},\{b_1,b_2,c_1,c_2\})$ is monotone independent. Assume $\varphi(b_1)=\tilde{b}_1, \varphi(b_2)=\tilde{b}_2, \varphi(c_1)=\tilde{c}_1, \varphi(c_2)=\tilde{c}_2$, and 
\begin{equation*} 
\varphi(b_ib_j)=\begin{cases}
b_{1,1} & \text{ if } (i,j)=(1,1), \\
b_{2,2} & \text{ if } (i,j)=(2,2), \\
b_{1,2} & \text{ if } (i,j)=(1,2) \text{ or }(2,1).
\end{cases}
\end{equation*}
We set $p=b_1ac_1ab_1+b_2ac_2ab_2$. Then the distribution of $p$ is 
$$
\alpha\mu_{a^2}\delta_{\vartheta+\zeta}+(1-\alpha)\mu_{a^2}\delta_{\vartheta-\zeta}
$$ 
%$$
%\varphi(p^k)=\frac{\varphi(a^{2k})}{2\zeta}(\zeta-\vartheta+\tilde{c}_1\tilde{b}_1^2+\tilde{c}_2\tilde{b}_1^2) (\vartheta+\zeta)^k + \frac{\varphi(a^{2k})}{2\zeta}(\zeta+\vartheta-\tilde{c}_1\tilde{b}_1^2-\tilde{c}_2\tilde{b}_1^2)(\vartheta-\zeta)^k
%$$
where $\mu_{a^2}$ is the distribution of $a^2$, $\delta$ denotes the dirac measure, $\alpha=\frac{1}{2\zeta}(\zeta-\vartheta+\tilde{c}_1\tilde{b}_1^2+\tilde{c}_2\tilde{b}_1^2)$ with
$$
 \vartheta=\frac{1}{2}(b_{2,2}\tilde{c}_2+b_{1,1}\tilde{c}_1) \text{ and }\zeta=\frac{1}{2}\sqrt{(b_{2,2}\tilde{c}_2-b_{1,1}\tilde{c}_1)^2+4b_{1,2}^2\tilde{c}_2\tilde{c}_1}.
$$ 
\end{prop}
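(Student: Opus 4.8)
The plan is to apply the matrix-representation technique of Theorem~\ref{thm: M-linear span} in the more general form recorded in Remark~\ref{remark general poly}. The polynomial $p=b_1ac_1ab_1+b_2ac_2ab_2$ is already written in the shape $\sum_{j=1}^{2}b_jy_jb_j'$ with $b_1'=b_1$, $b_2'=b_2$, and $y_j=ac_jab_j\in\mathcal{Q}$ (one can take $a_0=a$, the first internal $\cB$-letter $c_j$, then $a_1=a$, the second internal $\cB$-letter $b_j$, $a_2=1$); equivalently I will group so that $\tilde y_j=\tilde c_j\,a\cdot 1\cdot a=\tilde c_j a^2$ after replacing each internal $\cB$-factor by its $\varphi$-value as in \eqref{eqn: law-general-poly}. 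The upshot I want is that $\varphi(p^m)$ collapses to a sum over $j_1,\dots,j_m\in\{1,2\}$ of $\varphi\big(\tilde c_{j_1}a^2\cdots \tilde c_{j_m}a^2\big)$ times a product of scalar factors $\varphi(b_{j_1})\varphi(b_{j_1}b_{j_2})\cdots\varphi(b_{j_{m-1}}b_{j_m})\varphi(b_{j_m})$; since the $\tilde c_j$ are scalars, $\varphi(\tilde c_{j_1}a^2\cdots\tilde c_{j_m}a^2)=\tilde c_{j_1}\cdots\tilde c_{j_m}\varphi(a^{2m})$, so the whole thing factors as $\varphi(a^{2m})$ times a purely scalar sum.

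Next I would encode that scalar sum as a $2\times 2$ matrix product, exactly mirroring the proof of Theorem~\ref{thm: M-linear span}. Writing $v=(\tilde b_1,\tilde b_2)$ as a row selecting the outer $b_j$'s, $M$ for the $2\times2$ matrix with entries $M_{ij}=\varphi(b_ib_j)\tilde c_j$ (using the given values $b_{1,1},b_{2,2},b_{1,2}$ and $\tilde c_1,\tilde c_2$) for the internal transitions, and $w=(\tilde b_1,\tilde b_2)^{T}$ for the final outer $b_j$, the scalar sum is $v\,(\text{appropriate power})\,w$; more precisely, following the $B_0(AB_1B_0)^{k-1}AB_1$ manipulation in the theorem's proof one gets $\varphi(p^m)=\varphi(a^{2m})\cdot u^{T}N^{\,m-1}u'$ for explicit constant vectors $u,u'$ and the $2\times2$ matrix
$$
N=\begin{bmatrix} b_{1,1}\tilde c_1 & b_{1,2}\tilde c_2 \\ b_{1,2}\tilde c_1 & b_{2,2}\tilde c_2 \end{bmatrix}.
$$
Diagonalizing $N$ gives eigenvalues $\vartheta\pm\zeta$ with $\vartheta=\tfrac12(b_{1,1}\tilde c_1+b_{2,2}\tilde c_2)$ and $\zeta=\tfrac12\sqrt{(b_{2,2}\tilde c_2-b_{1,1}\tilde c_1)^2+4b_{1,2}^2\tilde c_1\tilde c_2}$, matching the statement. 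Then $\varphi(p^m)$ becomes a linear combination $\alpha(\vartheta+\zeta)^m\varphi(a^{2m})+(1-\alpha)(\vartheta-\zeta)^m\varphi(a^{2m})$ for a constant $\alpha$ determined by the boundary vectors, i.e.\ $p$ has the law $\alpha\,\mu_{a^2}\delta_{\vartheta+\zeta}+(1-\alpha)\mu_{a^2}\delta_{\vartheta-\zeta}$ (dilation of $\mu_{a^2}$ by $\vartheta\pm\zeta$, which is the reading of $\varphi(p^m)=\int t^m\,d(\cdot)$).

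The routine-but-careful part, and the main obstacle, is pinning down the constant $\alpha$: one must track the boundary row/column vectors built from $\tilde b_1,\tilde b_2$ through the eigenbasis of $N$ and read off the coefficient of $(\vartheta+\zeta)^m$, checking that it indeed equals $\frac{1}{2\zeta}(\zeta-\vartheta+\tilde c_1\tilde b_1^2+\tilde c_2\tilde b_1^2)$; this is a short linear-algebra computation but is where sign and normalization errors creep in, and it also silently uses the hypothesis that makes the relevant combination of moments consistent (in particular the symmetry $\varphi(b_1b_2)=\varphi(b_2b_1)=b_{1,2}$). I would also remark that the case $\zeta=0$ (repeated eigenvalue) is handled by continuity, giving $\mu_{a^2}\delta_{\vartheta}$. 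Everything else—the reduction via \eqref{eqn: law-general-poly}, the factoring out of $\varphi(a^{2m})$, and the passage from the moment formula to the stated measure—follows directly from Remark~\ref{remark general poly} and monotone independence, with no new ideas needed beyond those already used for Theorem~\ref{thm: M-linear span}.
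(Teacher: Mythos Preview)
Your plan is correct and shares the paper's overall strategy: both start from Remark~\ref{remark general poly} to reduce $\varphi(p^k)$ to $\varphi(a^{2k})\cdot\alpha_k$ with $\alpha_k$ a purely scalar sum over $j_1,\dots,j_k\in\{1,2\}$, and both then show $\alpha_k=\alpha(\vartheta+\zeta)^k+(1-\alpha)(\vartheta-\zeta)^k$ and fix $\alpha$ from the first two values.

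The execution of that second step differs. You package $\alpha_k$ as a vector--matrix--vector product and diagonalize the $2\times2$ transfer matrix $N$, directly mirroring the proof of Theorem~\ref{thm: M-linear span}. The paper instead unwinds the sum into auxiliary sequences $\alpha_k^{(1)},\alpha_k^{(2)}$, performs an iterated substitution to obtain the linear recurrence
\[
\alpha_k^{(1)}-(b_{2,2}\tilde c_2+b_{1,1}\tilde c_1)\alpha_{k-1}^{(1)}+(b_{2,2}b_{1,1}-b_{1,2}^2)\tilde c_2\tilde c_1\,\alpha_{k-2}^{(1)}=0,
\]
and solves its characteristic equation. These are the same computation: the characteristic polynomial of your $N$ coincides with the paper's recurrence polynomial, and the paper's initial conditions $\alpha_0=1$, $\alpha_1=\tilde c_1\tilde b_1^2+\tilde c_2\tilde b_2^2$ carry exactly the information in your boundary vectors. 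Your matrix route is shorter and more in keeping with the paper's own method for Theorem~\ref{thm: M-linear span}; the paper's recurrence route is more hands-on but involves a longer chain of substitutions before the characteristic equation emerges.
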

\begin{proof}
By Remark \ref{remark general poly}, we have 
\begin{eqnarray*}
    \varphi(p^k)
    &=&\varphi\Big([b_1\cdot a\varphi(c_1)a\cdot b_1+b_2\cdot a\varphi(c_2)a\cdot b_2]^k\Big) \\
             &=&\sum_{j_1,\dots,j_k=1}^2\varphi(a_{j_1}\cdots a_{j_k})\varphi(b_{j_1})\varphi(b_{j_1}b_{j_2})\varphi(b_{j_2}b_{j_3})\cdots \varphi(b_{j_{k-1}}b_{j_k})\varphi(b_{j_k})
\end{eqnarray*}
where $a_1=\varphi(c_1)a^2$ and $a_2=\varphi(c_2)a^2$. Note that for all $(j_1,\dots,j_k)\in~[2]^k$, 
$$
\varphi(a_{j_1}\cdots a_{j_k})=\varphi(a^{2k})\varphi(c_{j_1})\varphi(c_{j_2})\cdots \varphi(c_{j_k}).$$ Thus, if for each $i\in[2]$, we set $\alpha_1^{(i)}=\phi(b_i)$, and  
$$
\alpha_k^{(i)} = \sum_{j_1,\dots,j_{k-1}\in [2]}\varphi(b_i\cdot b_{j_1})\varphi(b_{j_1}b_{j_2})\cdots \varphi(b_{j_{k-2}}b_{j_{k-1}})\varphi(b_{j_{k-1}})\varphi(c_{j_1})\varphi(c_{j_2})\cdots \varphi(c_{j_{k-1}})
\text{ for }k\geq 2
$$ 
then we obtain
\begin{eqnarray*}
 \varphi(p^k)&=& \varphi(a^{2k})\alpha_k \quad\text{ where }\quad \alpha_k:=\varphi(b_1)\varphi(c_1)\alpha_k^{(1)}+\varphi(b_2)\varphi(c_2)\alpha_k^{(2)}.   
\end{eqnarray*}
We observe that for each $l\geq 2$,
\[
\alpha_{l}^{(2)}= \varphi(b_2b_1)\varphi(c_1)\alpha_{l-1}^{(1)}+\varphi(b_2b_2)\varphi(c_2)\alpha_{l-1}^{(2)} = b_{1,2}\tilde{c}_1\alpha_{l-1}^{(1)}+b_{2,2}\tilde{c}_2\alpha_{l-1}^{(2)},
\]
which when iterated we obtain for each $l\geq 3$,
\begin{eqnarray*}
\alpha_l^{(2)}
&=& b_{1,2}\tilde{c}_1\alpha_{l-1}^{(1)}+b_{1,2}\tilde{c}_1b_{2,2}\tilde{c}_2 \alpha_{l-2}^{(1)} + b_{2,2}^2\tilde{c}_2^2\alpha_{l-2}^{(2)} \\ 
&=& b_{1,2}\tilde{c}_1\Big(\alpha_{l-1}^{(1)} + b_{2,2}\tilde{c}_2\alpha_{l-2}^{(1)}+b_{2,2}^2\tilde{c}_2^2\alpha_{l-3}^{(1)}\Big)+ b_{2,2}^3\tilde{c}_2^3\alpha_{l-3}^{(2)} \\
&=& b_{1,2}\tilde{c}_1\Big(\alpha_{l-1}^{(1)} + \sum_{j=1}^{l-3}b_{2,2}^j\tilde{c}_2^j\alpha_{l-(j+1)}^{(1)}\Big)+ b_{2,2}^{l-2}\tilde{c}_2^{l-2}(b_{1,2}\tilde{b}_1\tilde{c}_1+b_{2,2}\tilde{b}_2\tilde{c}_2).
\end{eqnarray*}
Using the expression obtained for $\alpha_{k-1}^{(2)}$, we have get for any $k\geq 2$,
\begin{align*}
\alpha_k^{(1)} &= \varphi(b_1b_1)\varphi(c_1)\alpha_{k-1}^{(1)} + \varphi(b_1b_2)\varphi(c_2)\alpha_{k-1}^{(2)} \\
&= b_{1,1}\tilde{c}_1\alpha_{k-1}^{(1)} + b_{1,2}\tilde{c}_2\alpha_{k-1}^{(2)} \\
&= b_{1,1}\tilde{c}_1 \alpha_{k-1}^{(1)}+b_{1,2}^2 \tilde{c}_1 \Big(\tilde{c}_2 \alpha_{k-2}^{(1)}+ \sum_{j=1}^{k-4}b_{2,2}^{j}\tilde{c}_2^{j+1}\alpha_{k-j-2}^{(1)} 
+b_{2,2}^{k-3}\tilde{c}_2^{k-2}\tilde{b}_1 \Big) \\
 & \quad +b_{1,2}b_{2,2}^{k-2}\tilde{c}_2^{k-1}\tilde{b}_2.
 \end{align*}
Then we observe
\begin{eqnarray*}
    \alpha_k^{(1)}-b_{2,2}\tilde{c}_2\alpha_{k-1}^{(1)} = b_{1,1}\tilde{c}_1\alpha_{k-1}^{(1)}+(b_{1,2}^2-b_{2,2}b_{1,1})\tilde{c}_2\tilde{c}_1 \alpha_{k-2}^{(1)},
\end{eqnarray*}
and hence we obtain the following linear homogeneous recurrences
$$
\alpha_k^{(1)}-(b_{2,2}\tilde{c}_2+b_{1,1}\tilde{c}_1)\alpha_{k-1}^{(1)}+(b_{2,2}b_{1,1}-b_{1,2}^2)\tilde{c}_2\tilde{c}_1\alpha_{k-2}^{(1)}=0. 
$$
Thus by solving the characteristic equation $x^2-(b_{2,2}\tilde{c}_2+b_{1,1}\tilde{c}_1)x+(b_{2,2}b_{1,1}-b_{1,2}^2)\tilde{c}_2\tilde{c}_1=~0,$ we have $x=\vartheta\pm\zeta$ as a solution with $$\vartheta=\frac{1}{2}(b_{2,2}\tilde{c}_2+b_{1,1}\tilde{c}_1) \text{ and }\zeta=\frac{1}{2}\sqrt{(b_{2,2}\tilde{c}_2-b_{1,1}\tilde{c}_1)^2+4b_{1,2}^2\tilde{c}_2\tilde{c}_1}.
$$ This yields that 
$$
\alpha_k^{(1)}=s_1^{(1)} (\vartheta+\zeta)^k + s_2^{(1)} (\vartheta-\zeta)^k
$$
for some constants $s_1^{(1)}$ and $s_2^{(1)}$ that determined by the initial conditions, and similarly
$$
\alpha_k^{(2)}=s_1^{(2)}(\vartheta+\zeta)^k+s_2^{(2)}(\vartheta-\zeta)^k
$$
for some constants $s_1^{(2)}$ and $s_2^{(2)}$. Hence, putting the above terms together, we infer that for any $k\geq 1$,
$$
\alpha_k=(\tilde{b}_1\tilde{c}_1s_1^{(1)}+\tilde{b}_2\tilde{c}_2s_1^{(2)})(\vartheta+\zeta)^k+ (\tilde{b}_1\tilde{c}_1s_2^{(1)}+\tilde{b}_2\tilde{c}_2s_2^{(2)})(\vartheta-\zeta)^k.  
$$
Now we observe that $\alpha_0=1$ and $\alpha_1=\tilde{c}_1\tilde{b}_1^2+\tilde{c}_2\tilde{b}_2^2$. Thus,  
\begin{equation*}
\begin{cases}
\tilde{b}_1\tilde{c}_1(s_1^{(1)}+s_2^{(1)})+\tilde{b}_2\tilde{c}_2(s_1^{(2)}+s_2^{(2)}) = 1 \\
\tilde{b}_1\tilde{c}_1[s_1^{(1)}(\vartheta+\zeta)+s_2^{(1)}(\vartheta-\zeta)]+\tilde{b}_2\tilde{c}_2[s_1^{(2)}(\vartheta+\zeta)+s_2^{(2)}(\vartheta-\zeta)]= \tilde{c}_1\tilde{b}_1^2+\tilde{c}_2\tilde{b}_2^2.
\end{cases}
\end{equation*}
As a result, we obtain
\begin{eqnarray*}
\tilde{b}_1\tilde{c}_1s_1^{(1)}+\tilde{b}_2\tilde{c}_2s_1^{(2)} &=& \frac{1}{2\zeta}(\zeta-\vartheta+\tilde{c}_1\tilde{b}_1^2+\tilde{c}_2\tilde{b}_2^2) \\
\tilde{b}_1\tilde{c}_1s_2^{(1)}+\tilde{b}_2\tilde{c}_2s_2^{(2)} &=& \frac{1}{2\zeta}(\zeta+\vartheta-\tilde{c}_1\tilde{b}_1^2-\tilde{c}_2\tilde{b}_2^2)
\end{eqnarray*}
by solving a system of equations which concludes the proof by obtaining for any $k \geq 1$,
\begin{eqnarray*}
\varphi(p^k)&=&\varphi(a^{2k})\alpha_k \\
&=&\frac{\varphi(a^{2k})}{2\zeta}(\zeta-\vartheta+\tilde{c}_1\tilde{b}_1^2+\tilde{c}_2\tilde{b}_2^2) (\vartheta+\zeta)^k + \frac{\varphi(a^{2k})}{2\zeta}(\zeta+\vartheta-\tilde{c}_1\tilde{b}_1^2-\tilde{c}_2\tilde{b}_2^2)(\vartheta-\zeta)^k.
\end{eqnarray*}
\end{proof}

\section{Distribution of Polynomials in Infinitesimally Monotone Independent Elements}
Let $(\cC,\varphi,\varphi')$ be an infinitesimal $*$-probability space and consider its associated  $\widetilde{\mathbb{C}}$-valued probability space $(\widetilde{\cC},\widetilde{\bC},\widetilde{\varphi})$, described in Subsection \ref{subsection: inf Monotone}. Note that by Theorem \ref{thm: inf-M vs OV-M} and the fact that $\widetilde{\bC}$ is commutative, we extend our approach to the framework of $(\widetilde{\cC},\widetilde{\bC},\widetilde{\phi})$. This allows passing directly many results on $*$-distributions of polynomials in monotone independent variables to the infinitesimal setting. 

For two infinitesimally monotone elements $a$ and $b$, we start by illustrating, in Section \ref{Section inf linear span}, how this allows us to derive an explicit expression of the infinitesimal distribution of linear spans of $ab$ and $ba$, covering the cases of the commutator and anti-commutator. Then, we discuss in Remark \ref{remark inf general poly}, more general polynomials in infinitesimal monotone independent elements.  

\medskip
\paragraph{\bf Linear spans in $ab$ and $ba$.} \label{Section inf linear span}
The commutativity of $\widetilde{\mathbb{C}}$ allows deriving explicit expressions of the infinitesimal $*$-distribution of linear spans. Let $a$ and $b$ be two elements in $\cC$ such that $a\pprec b$ and  set $A=\diag(a,a)$ and $B=~\diag(b,b)$. Then, by Theorem \ref{thm: inf-M vs OV-M}, we have that $A\prec B$ with respect to $\widetilde{\varphi}$. We obtain, following the same lines of proof as in Theorem \ref{thm: M-linear span}, the explicit distribution of the linear span $P_{\alpha,\beta}:=P_{\alpha,\beta}(A,B)=\alpha AB+\beta BA$ for any non-zero complex numbers $\alpha$ and $\beta$. More precisely, setting $X=\widetilde{\varphi}(B)$ and $\Gamma=\sqrt{(\alpha-\beta)^2\widetilde{\varphi}(B)^2+4\alpha\beta\widetilde{\varphi}(B^2)}$, we obtain for each $k\geq 1$,
    \begin{eqnarray}\label{formula:OV-span}
\widetilde{\varphi}(P_{\alpha,\beta}^k)  
=\frac{\widetilde{\varphi}(A^k)\Gamma^{-1}}{2^{k+1}} \left[\left((\alpha+\beta)X+\Gamma\right)^{k+1}-\left((\alpha+\beta)X-\Gamma\right)^{k+1} \right].
    \end{eqnarray}
The key point that enables deriving such an expression following the same lines as in the scalar case is the fact that $\widetilde{\bC}$ is commutative, and more precisely,  $X\Gamma= \Gamma X$. Note that by comparing the $(1,2)$-entry of both sides of \eqref{formula:OV-span}, we obtain explicitly the $k$th moment of $p_{\alpha,\beta}$ with respect to $\varphi'$. 

\begin{rem}
Note that we can also compute $\varphi'(p_{\alpha,\beta}^k)$ by formally differentiating $\varphi(p_{\alpha,\beta}^k)$ in Theorem \ref{thm: M-linear span}. However, we adopt the method in \cite{TS19,PT21} by translating problems in the infinitesimal setting to independence over the commutative algebra $\widetilde{\mathbb{C}}$. This methodology proves beneficial for addressing a range of other problems in infinitesimal independence, which motivates our choice to illustrate it for computing the infinitesimal distribution.
\end{rem}

\begin{thm}\label{thm: inf-linear span-M}
Let $a$ and $b$ be two elements in an infinitesimal $*$-probability space $(\cC,\phi,\phi')$ such that $\cA_a\pprec \mathring{\cA}_b$. For given non-zero complex numbers $\alpha$ and $\beta$, let $p_{\alpha,\beta}=\alpha ab+\beta ba$. Then for each $k\geq 0$,
\begin{multline*}
    \varphi'(p_{\alpha,\beta}^k) =
\frac{\phi(a^k)}{2^{k+1}\gamma}\Bigg[((\alpha+\beta)\varphi(b)+\gamma)^k\Big((\alpha+\beta)\big((k+1)\phi'(b)-\varphi(b)\omega/\gamma^2\big)+k\omega/\gamma \Big) \\
- ((\alpha+\beta)\varphi(b)-\gamma)^k\Big((\alpha+\beta)\big((k+1)\phi'(b)-\varphi(b)\omega/\gamma^2\big)-k\omega/\gamma  \Big)
\Bigg] \\
+ \frac{\varphi'(a^k)}{2^{k+1}\gamma}\Big[((\alpha+\beta)\varphi(b)+\gamma)^{k+1}-((\alpha+\beta)\varphi(b)-\gamma)^{k+1}\Big],
\end{multline*}
where 
\begin{equation}\label{gamma and omega}
\ \gamma=\sqrt{(\alpha-\beta)^2\varphi(b)^2+4\alpha\beta\varphi(b^2)}\quad \text{and} \quad
\omega=(\alpha-\beta)^2\varphi(b)\varphi'(b)+2\alpha\beta \varphi'(b^2).
\end{equation}
\end{thm}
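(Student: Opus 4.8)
The plan is to run the upper-triangular machinery of Subsection~\ref{subsection: inf Monotone}, which reduces the infinitesimal statement to dual-number bookkeeping on top of the scalar identity \eqref{formula:OV-span} already recorded above. Concretely, I would set $A=\diag(a,a)$ and $B=\diag(b,b)$ in $\widetilde{\cC}$; by Theorem~\ref{thm: inf-M vs OV-M} these satisfy $A\prec B$ with respect to $\widetilde{\varphi}$, and since $P_{\alpha,\beta}(A,B)=\diag(p_{\alpha,\beta},p_{\alpha,\beta})$ one gets
\[
\widetilde{\varphi}(P_{\alpha,\beta}^k)=\begin{bmatrix}\varphi(p_{\alpha,\beta}^k) & \varphi'(p_{\alpha,\beta}^k)\\ 0 & \varphi(p_{\alpha,\beta}^k)\end{bmatrix}.
\]
So $\varphi'(p_{\alpha,\beta}^k)$ is exactly the $(1,2)$-entry of the right-hand side of \eqref{formula:OV-span}, and the whole proof will consist of reading off that entry. (The identity \eqref{formula:OV-span} itself is obtained by repeating the argument of Theorem~\ref{thm: M-linear span} verbatim over $\widetilde{\mathbb{C}}$; the only step using commutativity of the base ring is unproblematic because $\widetilde{\mathbb{C}}$ is commutative.)

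To read off the entry I would identify $\widetilde{\mathbb{C}}$ with the dual numbers $\mathbb{C}[\epsilon]/(\epsilon^2)$ via $\left[\begin{smallmatrix}c & c'\\ 0 & c\end{smallmatrix}\right]\leftrightarrow c+c'\epsilon$, a commutative local ring in which square roots and inverses of units obey the first-order rules $\sqrt{1+t\epsilon}=1+\tfrac{t}{2}\epsilon$ and $(1+t\epsilon)^{-1}=1-t\epsilon$. Under this identification $X=\widetilde{\varphi}(B)=\varphi(b)+\varphi'(b)\epsilon$, $\widetilde{\varphi}(B^2)=\varphi(b^2)+\varphi'(b^2)\epsilon$ and $\widetilde{\varphi}(A^k)=\varphi(a^k)+\varphi'(a^k)\epsilon$. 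A short computation then gives $\Gamma^2=(\alpha-\beta)^2X^2+4\alpha\beta\,\widetilde{\varphi}(B^2)=\gamma^2+2\omega\epsilon$ with $\gamma,\omega$ as in \eqref{gamma and omega}, whence $\Gamma=\gamma+(\omega/\gamma)\epsilon$ and $\Gamma^{-1}=\gamma^{-1}-(\omega/\gamma^3)\epsilon$ (here one uses the genericity condition $\gamma\neq0$, already implicit in Theorem~\ref{thm: M-linear span}).

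The remaining step is mechanical: writing $u_\pm=(\alpha+\beta)\varphi(b)\pm\gamma$ and $v_\pm=(\alpha+\beta)\varphi'(b)\pm\omega/\gamma$, one has $(\alpha+\beta)X\pm\Gamma=u_\pm+v_\pm\epsilon$, hence $\bigl((\alpha+\beta)X\pm\Gamma\bigr)^{k+1}=u_\pm^{k+1}+(k+1)u_\pm^{k}v_\pm\,\epsilon$ since $\epsilon^2=0$. Substituting these, $\Gamma^{-1}$, and $\widetilde{\varphi}(A^k)$ into \eqref{formula:OV-span} and expanding in $\mathbb{C}[\epsilon]/(\epsilon^2)$: the $\epsilon$-free part reproduces Theorem~\ref{thm: M-linear span}, while the coefficient of $\epsilon$ is
\[
\frac{\varphi'(a^k)}{2^{k+1}\gamma}\bigl(u_+^{k+1}-u_-^{k+1}\bigr)+\frac{\varphi(a^k)}{2^{k+1}\gamma}\Bigl[(k+1)\bigl(u_+^{k}v_+-u_-^{k}v_-\bigr)-\tfrac{\omega}{\gamma^2}\bigl(u_+^{k+1}-u_-^{k+1}\bigr)\Bigr].
\]
Folding the last term into the bracket via $u_\pm^{k+1}=u_\pm^{k}u_\pm$ and simplifying $(k+1)v_\pm-\tfrac{\omega}{\gamma^2}u_\pm=(\alpha+\beta)\bigl((k+1)\varphi'(b)-\varphi(b)\omega/\gamma^2\bigr)\pm k\omega/\gamma$ produces exactly the bracketed factors in the statement, giving the claimed formula; the case $k=0$ checks out, both sides reducing to $\varphi'(1)=0$.

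I do not expect a genuine obstacle: the substantive input — the scalar identity \eqref{formula:OV-span} together with the fact that it holds over the commutative ring $\widetilde{\mathbb{C}}$ — is already in hand, so the only thing needing care is the dual-number arithmetic (the square root and inverse of $\Gamma$, and the collection of the $\epsilon$-coefficient). As in Theorem~\ref{thm: M-linear span}, the derivation carries the standing assumption $\gamma\neq0$; the degenerate case then follows by continuity in the parameters.
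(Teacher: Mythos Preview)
Your proof is correct and follows essentially the same route as the paper: both read off the $(1,2)$-entry of \eqref{formula:OV-span} after computing $X$, $\Gamma$, $\Gamma^{-1}$, and $\bigl((\alpha+\beta)X\pm\Gamma\bigr)^{k+1}$ in $\widetilde{\mathbb{C}}$. The only difference is notational---you work with the dual-number presentation $\mathbb{C}[\epsilon]/(\epsilon^2)$ while the paper keeps the explicit $2\times 2$ upper-triangular matrices---and your final simplification $(k+1)v_\pm-\tfrac{\omega}{\gamma^2}u_\pm=(\alpha+\beta)\bigl((k+1)\varphi'(b)-\varphi(b)\omega/\gamma^2\bigr)\pm k\omega/\gamma$ is exactly the step the paper carries out matrixwise.
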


\begin{rem}
Note that the infinitesimal $*$-distribution of $p_{\alpha,\beta}$ only depends on that of $a$ and  on $\phi(b),\phi(b^2),\phi'(b)$ and $\phi'(b^2)$. In the particular case where $\phi(b)=\phi'(b)=0$, this reduces to  
$$
\phi'(p_{\alpha,\beta}^k)=\Big[\frac{k}{4}\phi'(b^2)\phi(b^2)^{k/2-1}\phi(a^k)+\frac{\varphi'(a^k)\varphi(b^2)^{k/2}}{2}\Big](\alpha\beta)^{k/2}\Big(1+(-1)^k\Big).
$$
\end{rem}

Again, as a direct consequence of Theorem \ref{thm: inf-linear span-M}, we obtain the infinitesimal $*$-distribution of anti-commutators and commutators by taking $(\alpha,\beta)=(1,1)$ or $(\alpha,\beta)=(i,-i)$ respectively.  
\begin{cor}
Let $p=ab+ba$ and $q=i(ab-ba)$, then for each $k\geq 1$, we have 
\begin{eqnarray*}
\phi'(p^k)
&=&\frac{\phi(a^k)}{4\sqrt{\phi(b^2)}\phi(b^2)}\Bigg[(\phi(b)+\sqrt{\phi(b^2)})^k\Big(2(k+1)\phi'(b)\phi(b^2)-\phi'(b^2)\big[\phi(b)-k\sqrt{\phi(b^2)}\big]\Big) \\
&&+(\phi(b)-\sqrt{\phi(b^2)})^k\Big(2(k+1)\phi'(b)\phi(b^2)-\phi'(b^2)\big[\phi(b)+k\sqrt{\phi(b^2)}\big]\Big)\Bigg] \\
&&+ \frac{\varphi'(a^k)}{2\sqrt{\varphi(b^2)}}\left([\varphi(b)+\sqrt{\varphi(b^2)}]^{k+1}-[\varphi(b)-\sqrt{\varphi(b^2)}]^{k+1}\right)
\end{eqnarray*}
and
\begin{eqnarray*}
    \phi'(q^k)&=& \frac{\phi(a^k)k}{4}\left(1+(-1)^k\right)\left(\sqrt{\phi(b^2)-\phi(b)^2}\right)^{k-2}\left(\phi'(b^2)-2\phi(b)\phi'(b)\right) \\
    &&+\frac{\varphi'(a^k)}{2}\left(1+(-1)^{k}\right)\left(\sqrt{\varphi(b^2)-\varphi(b)^2}\right)^{k}.
\end{eqnarray*}
\end{cor}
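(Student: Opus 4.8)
The plan is to derive both identities by specializing Theorem~\ref{thm: inf-linear span-M}: indeed $p=p_{1,1}$ and $q=p_{i,-i}$, and in both cases the coefficients $\alpha,\beta$ are non-zero, so the theorem applies verbatim.

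For the anti-commutator take $\alpha=\beta=1$, so that $\alpha+\beta=2$, $\alpha\beta=1$ and $(\alpha-\beta)^2=0$. Then \eqref{gamma and omega} gives $\gamma=2\sqrt{\phi(b^2)}$ and $\omega=2\phi'(b^2)$, and $(\alpha+\beta)\phi(b)\pm\gamma=2\bigl(\phi(b)\pm\sqrt{\phi(b^2)}\bigr)$, so that $\bigl((\alpha+\beta)\phi(b)\pm\gamma\bigr)^m=2^m\bigl(\phi(b)\pm\sqrt{\phi(b^2)}\bigr)^m$ for every $m$. Substituting these into the formula of Theorem~\ref{thm: inf-linear span-M}, the factors of $2$ combine with the prefactor $2^{-(k+1)}\gamma^{-1}$, and after multiplying the inner bracket by $\phi(b^2)/\phi(b^2)$ to clear the $\gamma^2=4\phi(b^2)$ occurring in the denominator, one collects the coefficients of $\bigl(\phi(b)+\sqrt{\phi(b^2)}\bigr)^k$ and of $\bigl(\phi(b)-\sqrt{\phi(b^2)}\bigr)^k$ to obtain the stated expression for $\phi'(p^k)$; the $\phi'(a^k)$-term reduces, via $(u+v)^{k+1}-(u-v)^{k+1}$ with $u=\phi(b)$ and $v=\sqrt{\phi(b^2)}$, to exactly the last summand.

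For the commutator take $\alpha=i$, $\beta=-i$, so that $\alpha+\beta=0$, $\alpha\beta=1$ and $(\alpha-\beta)^2=-4$; then \eqref{gamma and omega} gives $\gamma=2\sqrt{\phi(b^2)-\phi(b)^2}$ and $\omega=2\bigl(\phi'(b^2)-2\phi(b)\phi'(b)\bigr)$. The decisive simplification is that $(\alpha+\beta)\phi(b)=0$, so every term carrying the factor $\alpha+\beta$ drops and $\bigl((\alpha+\beta)\phi(b)\pm\gamma\bigr)^m=(\pm\gamma)^m$. What survives of the $\phi(a^k)$-part of Theorem~\ref{thm: inf-linear span-M} is $\frac{\phi(a^k)}{2^{k+1}\gamma}\bigl[\gamma^{k-1}k\omega+(-1)^k\gamma^{k-1}k\omega\bigr]=\frac{k\omega}{2^{k+1}}\bigl(1+(-1)^k\bigr)\gamma^{k-2}\phi(a^k)$, and of the $\phi'(a^k)$-part is $\frac{\phi'(a^k)}{2^{k+1}\gamma}\bigl[\gamma^{k+1}-(-1)^{k+1}\gamma^{k+1}\bigr]=\frac{\phi'(a^k)}{2^{k+1}}\bigl(1+(-1)^k\bigr)\gamma^k$, using $1-(-1)^{k+1}=1+(-1)^k$. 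Substituting the above values of $\gamma$ and $\omega$ and simplifying the powers of $2$ yields the claimed formula for $\phi'(q^k)$; note that for odd $k$ both terms vanish because of the factor $1+(-1)^k$, consistent with $q$ being an odd element.

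The argument has no conceptual difficulty — it is a pure specialization of Theorem~\ref{thm: inf-linear span-M}. The only point requiring care is the anti-commutator case, where one must correctly account for the powers of $2$ coming from $(\alpha+\beta)\phi(b)\pm\gamma=2(\phi(b)\pm\sqrt{\phi(b^2)})$ against those in the prefactor, and keep the signs straight when distributing the $\pm k\omega/\gamma$ contributions over the two exponential terms; the commutator case is easier since the $\alpha+\beta$ contributions vanish identically.
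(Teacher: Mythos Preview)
Your proposal is correct and follows exactly the approach indicated in the paper: the corollary is obtained by specializing Theorem~\ref{thm: inf-linear span-M} to $(\alpha,\beta)=(1,1)$ and $(\alpha,\beta)=(i,-i)$, and your computations of $\gamma$, $\omega$, and the ensuing simplifications are accurate in both cases. The paper itself gives no further argument beyond this specialization, so there is nothing to compare.
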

\begin{proof}[Proof of Theorem \ref{thm: inf-linear span-M}]
We start by noting that 
$$
\widetilde{\varphi}(P_{\alpha,\beta}^k)=\widetilde{\varphi}\left(\begin{bmatrix} p_{\alpha,\beta}^k & 0 \\ 0 & p_{\alpha,\beta}^k
\end{bmatrix}\right)=\begin{bmatrix}
    \varphi(p_{\alpha,\beta}^k) & \varphi'(p_{\alpha,\beta}^k) \\ 0 & \varphi(p_{\alpha,\beta}^k)
\end{bmatrix}. 
$$
Thus in order to derive an expression for $ \varphi'(p_{\alpha,\beta}^k)$, we shall compute the $(1,2)$-entry of the right hand side of \eqref{formula:OV-span}. In order to do this, let us note  for any $z,z' \in \mathbb{C}$ and matrix $M=\begin{bmatrix}
    z & z' \\ 0 & z
\end{bmatrix}\in\widetilde{\bC}$,
$$
M^{1/2}=\sqrt{M}= \begin{bmatrix}
    \sqrt{z} & \frac{z'}{2\sqrt{z}} \\ 0 & \sqrt{z}
\end{bmatrix} \text{ and }M^{-1}= \begin{bmatrix}
    z^{-1} & -z^{-2}z' \\ 0 & z^{-1}
\end{bmatrix}. 
$$ Setting $x=\varphi(b)$ and $y=\varphi'(b)$, we observe that 
\[
X=\widetilde{\varphi}(B)= \begin{bmatrix}
    x & y \\ 0 & x
\end{bmatrix}
\quad \text{and} \quad  
    \Gamma =\sqrt{(\alpha-\beta)^2\widetilde{\varphi}(B)^2+4\alpha\beta\widetilde{\varphi}(B^2)} \\
        = \begin{bmatrix}
        \gamma & \frac{\omega}{\gamma} \\ 0 & \gamma
    \end{bmatrix},
\]
where $\gamma$ and $\omega$ are defined in \eqref{gamma and omega}, and finally obtain that
$$
\frac{1}{2^{k+1}}\Gamma^{-1}=\frac{1}{2^{k+1}}\begin{bmatrix}
    \gamma^{-1} & -\gamma^{-2}\cdot\omega/\gamma \\ 0 & \gamma^{-1}
\end{bmatrix}= \frac{1}{2^{k+1}\gamma}\begin{bmatrix}
        1 & -\omega/\gamma^2 \\ 0 & 1
    \end{bmatrix}.
$$
Note that 
\begin{eqnarray*}
((\alpha+\beta)X-\Gamma)^{k+1} &=& \begin{bmatrix}
   ((\alpha+\beta)x-\gamma)^{k+1} & (k+1)\big((\alpha+\beta)x-\gamma\big)^{k}\big((\alpha+\beta)y-\omega/\gamma\big) \\ 0 & ((\alpha+\beta)x-\gamma)^{k+1}
\end{bmatrix} \\
&=&  ((\alpha+\beta)x-\gamma)^{k} \begin{bmatrix}
    (\alpha+\beta)x-\gamma & (k+1)\big((\alpha+\beta)y-\omega/\gamma\big) \\ 0 & (\alpha+\beta)x-\gamma
\end{bmatrix}.
\end{eqnarray*}
Therefore, 
\begin{eqnarray*}
&&\frac{1}{2^{k+1}}\Gamma^{-1}\left((\alpha+\beta)X-\Gamma\right)^{k+1} \\
&=& \frac{((\alpha+\beta)x-\gamma)^k}{2^{k+1}\gamma} \begin{bmatrix}
    1 & -\omega/\gamma^2 \\ 0 & 1
\end{bmatrix} \begin{bmatrix}
    (\alpha+\beta)x-\gamma & (k+1)((\alpha+\beta)y-\omega/\gamma) \\ 0 & (\alpha+\beta)x-\gamma
\end{bmatrix} \\
&=& \frac{((\alpha+\beta)x-\gamma)^k}{2^{k+1}\gamma}\begin{bmatrix}
    (\alpha+\beta)x-\gamma & (k+1)((\alpha+\beta)y-\omega/\gamma)-\omega/\gamma^2((\alpha+\beta)x-\gamma) \\ 0 & (\alpha+\beta)x-\gamma
\end{bmatrix} \\
&=& \frac{((\alpha+\beta)x-\gamma)^k}{2^{k+1}\gamma}\begin{bmatrix}
    (\alpha+\beta)x-\gamma & (\alpha+\beta)\Big((k+1)y-x\omega/\gamma^2\Big)-k\omega/\gamma \\ 0 & (\alpha+\beta)x-\gamma
\end{bmatrix}
\end{eqnarray*}
Similarly, we also have
\begin{multline*}
\frac{1}{2^{k+1}}\Gamma^{-1}((\alpha+\beta)X+\Gamma)^{k+1} \\
= \frac{((\alpha+\beta)x+\gamma)^{k}}{2^{k+1}\gamma} \begin{bmatrix}
    (\alpha+\beta)x+\gamma &  (\alpha+\beta)\Big((k+1)y-x\omega/\gamma^2\Big)+k\omega/\gamma  \\ 0 & (\alpha+\beta)x+\gamma
\end{bmatrix}.
\end{multline*}
Thus, the right hand side of Equation \eqref{formula:OV-span} is 
 \begin{multline*}
\frac{\phi(a^k)}{2^{k+1}\gamma}\Bigg[((\alpha+\beta)\varphi(b)+\gamma)^k\Big((\alpha+\beta)\big((k+1)\phi'(b)-\varphi(b)\omega/\gamma^2\big)+k\omega/\gamma \Big) \\
- ((\alpha+\beta)\varphi(b)-\gamma)^k\Big((\alpha+\beta)\big((k+1)\phi'(b)-\varphi(b)\omega/\gamma^2\big)-k\omega/\gamma  \Big)
\Bigg] \\
+ \frac{\varphi'(a^k)}{2^{k+1}\gamma}\Big[((\alpha+\beta)\varphi(b)+\gamma)^{k+1}-((\alpha+\beta)\varphi(b)-\gamma)^{k+1}\Big]
\end{multline*}
which completes the proof.
\end{proof}
\begin{rem}\label{remark inf general poly} \textit{(General Polynomials)}
The commutativity of $\widetilde{\mathbb{C}}$ allows again deriving infinitesimal $*$-distributions of general polynomials. Following the notation in Remark \ref{remark general poly} and assuming $\cA \pprec \mathring{\cB}$, let $p$ be any polynomials  of the form $p=\sum_{j=1}^nb_jy_jb_j'$ with $b_1,b_1',\dots,b_n,b_n'\in\cB$ and $y_1,\dots,y_n\in \mathcal{Q}$. We denote by \[
\widetilde{\mathcal{Q}}=\{A_0B_1A_1\cdots B_kA_k \mid k\geq 0, a_0,\dots,a_k\in \cA \text{ and } b_1,\dots,b_k\in\cB \}
\]
where  
$
A_i=\diag (a_i ,a_i)$  and $ B_j=\diag(b_j, b_j)$ for each $0\leq i\leq n, 1\leq j\leq n$. For each element $Y=A_0B_1A_1\cdots B_kA_k$ in $\widetilde{\mathcal{Q}}$, we set $\widetilde{Y}=A_0\widetilde{\phi}({B}_1)A_1\cdots \widetilde{\phi}({B}_k)A_k$. 
 The for $P=\sum_{j=1}^nB_jY_jB_j'$, we have by Remark \ref{remark general poly} for any $m \in \mathbb{N}$,
\begin{eqnarray}\label{formula:tilde-phi-p^m}
\widetilde{\phi}(P^m) &=&\widetilde{\phi}(\big[\sum_{j=1}^n B_j\widetilde{Y_j}B_j' \big]^m) \nonumber \\ 
&=& \sum_{j_1,\dots,j_m\in [n]}\widetilde{\phi}(\widetilde{Y}_{j_1}\cdots \widetilde{Y}_{j_m})\widetilde{\phi}(B_{j_1})\widetilde{\phi}(B_{j_1}'B_{j_2})\widetilde{\phi}(B_{j_2}'B_{j_3})\cdots \widetilde{\phi}(B'_{j_{m-1}}B_{j_m})\widetilde{\phi}(B_{j_m}), 
\end{eqnarray}
It remains to track the $(1,2)$-entry of the above equality. In the particular case where $p=\sum_{k=1}^n b_ka_kb_k'$, then % then $\varphi'\left([\sum_{k=1}^n b_ka_kd_k]^m\right)$ is equal to
$$\varphi'(p^m)= \!\! \! \!\sum_{\substack{j_1,\dots,j_m\in [n] \\ 
(u_1,\dots,u_{m+1})\in \mathcal{W}_m}} \! \! \! 
\varphi^{(u_1)}(a_{j_1}a_{j_2}\cdots a_{j_m})\varphi^{(u_2)}(b_{j_1})\varphi^{(u_3)}(b_{j_1}'b_{j_2})\cdots \varphi^{(u_{m})}(b'_{j_{m-1}}b_{j_m})\varphi^{(u_{m+1})}(b'_{j_m}),
$$
where $\phi^{(0)}=\phi$ and $\phi^{(1)}=\phi'$ and for each $m\geq 0$, 
$$
\mathcal{W}_m=\{(1,0,0,\dots,0),(0,1,0,\cdots,0),
\cdots,(0,0,\cdots,0,1) \} \subset \mathbb{R}^{m+1}.
$$
\end{rem}

\section{Applications to Random Matrices}\label{Section: RM example}
We show in the section the utility of our results to derive the distributions of some random matrices with respect to the partial trace. Let us first recall that the partial trace is defined for a given $N\times N$ random matrix $A_N$ by
\begin{equation*}
\psi_N(A_N):= \frac{1}{N_0}(\E\circ Tr_{N_0})(A^{(1,1)})
\end{equation*}
where $Tr_{N_0}$ is the non-normalized trace and $A^{(1,1)}$ is the first block of $A_N$. To draw the connection to random matrices, we recall the notation in Subsection \ref{subsection 2.3} and the results stated therein and prove the following:
 
\begin{prop}\label{prop: Lemma for Section 5}
Suppose $A_N$ and $B_N$ are independent $N\times N$ Wigner matrices as defined in Section \ref{subsection 2.3}, then $(T_{Q(A_N)},B_N)_N$ is asymptotically monotone independent with respect to $\psi_N$ for any polynomial $Q$ whose constant term is zero.  
\end{prop}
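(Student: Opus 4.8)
The plan is to recognise $T_{Q(A_N)}$, in the limit, as an element of the algebra $\mathcal{J}_a(\cA)$ that appears in Theorem \ref{thm: Inf idem to M}, and then read off asymptotic monotone independence directly from that theorem, the two inputs being the asymptotic freeness of independent Wigner matrices and the asymptotic infinitesimal freeness from finite-rank matrices recorded in Theorem \ref{thm:inf free - WEJ}.

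First I would fix the limiting infinitesimal framework. Set $j_N=\sum_{\ell=1}^{N_0}E_N^{(\ell,\ell)}$ and let $(\varphi_N,\varphi_N')$ be the infinitesimal law of the family $\{A_N,B_N,(E_N^{(j,k)})_{1\le j,k\le N_0}\}$. Independent Wigner matrices are asymptotically free with respect to $\varphi_N$, and Theorem \ref{thm:inf free - WEJ} gives that $\{A_N,B_N\}$ is asymptotically infinitesimally free from $(E_N^{(j,k)})_{1\le j,k\le N_0}$; so this family has a joint asymptotic infinitesimal law, which I would realise in an infinitesimal $*$-probability space $(\cC,\varphi,\varphi')$ with limit elements $a$, $b$, $(e_{\ell,m})_{1\le\ell,m\le N_0}$, putting $j=\sum_{\ell=1}^{N_0}e_{\ell,\ell}$. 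Then $j$ is an infinitesimal idempotent with $\varphi'(j)\neq0$, and with $\cA,\cB$ the unital $*$-algebras generated by $a$ and by $b$ one has, by construction, that $\{\cA,\cB\}$ is infinitesimally free from $\{j\}$ while $\cA$ is free from $\cB$ with respect to $\varphi$; moreover $\psi(\cdot)=\varphi'(\,\cdot\,j)/\varphi'(j)$ is the limit of the partial traces $\psi_N$ of \eqref{formula: partial-trace}, as recalled just after that display.

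Next I would apply Theorem \ref{thm: Inf idem to M} to obtain that $(\mathcal{J}_a(\cA),\cB)$ is monotone independent with respect to $\psi$, and then locate $T_{Q(A_N)}$ inside $\mathcal{J}_a(\cA)$. Since $j_N$ is the orthogonal projection onto the first $N_0$ coordinates, $j_N M(1-j_N)+(1-j_N)Mj_N=\begin{bmatrix}0&M^{(1,2)}\\ M^{(2,1)}&0\end{bmatrix}=T_M$ for every $N\times N$ matrix $M$; applying this to $M=Q(A_N)$ and passing to the limit produces $T_{Q(a)}:=j^{(-1)}Q(a)j^{(1)}+j^{(1)}Q(a)j^{(-1)}$, where $j^{(-1)}=j$ and $j^{(1)}=1-j$. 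As $Q(0)=0$ (and in any case $j^{(-1)}j^{(1)}=0$ kills any scalar part), $Q(a)\in\cA$, so $j^{(-1)}Q(a)j^{(1)}$ and $j^{(1)}Q(a)j^{(-1)}$ are alternating words ($k=2$, $\epsilon_1\neq\epsilon_2$, $a_1=Q(a)$) of the generating set of $\mathcal{J}_a(\cA)$; hence $T_{Q(a)}\in\mathcal{J}_a(\cA)$, and since $b\in\cB$ the monotone independence of $(\mathcal{J}_a(\cA),\cB)$ restricts to that of $(\{T_{Q(a)}\},\{b\})$ for $\psi$. Because $T_{Q(A_N)}$ and $B_N$ are fixed polynomials in $A_N,B_N,j_N$ and $\psi_N(\,\cdot\,)=\varphi_N'(\,\cdot\,j_N)/\varphi_N'(j_N)$ with $\varphi_N'(j_N)\to\varphi'(j)\neq0$, every mixed $\psi_N$-moment of $T_{Q(A_N)},B_N$ converges to the corresponding $\psi$-moment of $T_{Q(a)},b$, which yields the asserted asymptotic monotone independence.

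I expect the main obstacle to be the first step: making sure the asymptotic freeness of $(A_N,B_N)$ for $\varphi_N$ and the asymptotic infinitesimal freeness of $\{A_N,B_N\}$ from the unit matrices for $(\varphi_N,\varphi_N')$ can be captured \emph{simultaneously} in a single limiting infinitesimal space, so that the two hypotheses of Theorem \ref{thm: Inf idem to M} ($\cA$ free from $\cB$, and $\{\cA,\cB\}$ infinitesimally free from $\{j\}$) genuinely hold there. This is precisely where the moment conditions on the Wigner entries and Au's theorem are used. Once that joint limit is in hand, the remainder is the purely algebraic identification of $T_{Q(a)}$ as a generator-sum in $\mathcal{J}_a(\cA)$, which uses nothing about $Q$ beyond $Q(0)=0$.
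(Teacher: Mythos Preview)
Your proposal is correct and follows essentially the same route as the paper: identify $T_{Q(A_N)}=j_NQ(A_N)(1-j_N)+(1-j_N)Q(A_N)j_N$, invoke asymptotic freeness of $A_N,B_N$ together with Theorem~\ref{thm:inf free - WEJ} to get asymptotic infinitesimal freeness of $\{Q(A_N),B_N\}$ from $j_N$, and then apply Theorem~\ref{thm: Inf idem to M}. Your write-up is more explicit about constructing the limiting infinitesimal space and locating $T_{Q(a)}$ inside $\mathcal{J}_a(\cA)$, but the argument is the same.
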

\begin{proof}
Let $Q$ be a polynomial that has no constant term. We note that $A_N$ and $B_N$ are asymptotically free with respect to $\E\circ Tr_N$ where we recall that $tr_N$ is the normalized trace. Also note that $T_{Q(A_N)}=j_NQ(A_N)(1-j_N)+(1-j_N)Q(A_N)j_N$ where $j_N=\sum_{j=1}^{N_0}E_{N}^{j,j}$ and that by Theorem \ref{thm:inf free - WEJ}, $Q(A_N)$ and $B_N$ are asymptotically infinitesimally free from $j_N$ . Then it follows by Theorem \ref{thm: Inf idem to M} that $(T_{Q(A_N)},B_N)_N$ is asymptotically monotone independent with respect to $\psi_N$.
\end{proof}

\begin{prop}\label{prop Wigner LS}
Let $A_N$ and $B_N$ be independent $N\times N$ Wigner matrices and let for some $m\geq 0$ and non-zero $\alpha\in\mathbb{C}$, $P_N$ be the polynomial defined by 
\begin{equation}\label{poly: self-P}
P_N=\alpha T_{A_N^m}B_N+\widebar{\alpha} B_NT_{A_N^m}.
\end{equation}Then the limiting distribution of $P_N$ with respect to $\psi_N$ is $\mu=\frac{1}{2}(\delta_{\omega}+\delta_{-\omega})$ where $\delta$ is the dirac measure and $\omega=\sqrt{|\alpha|D_m}$ with $$
D_m = \begin{cases}C_{m}-C^2_{m/2} & \text{ if }  m\text{ is even,} \\
            C_m  & \text{ if }  m \text{ is odd,} 
        \end{cases}
$$
and $C_k$ being the $k$-th Catalan number.
\end{prop}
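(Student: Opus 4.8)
The plan is to recognise $P_N$, in the large-$N$ limit, as a linear span $p_{\alpha,\overline{\alpha}}(a,b)=\alpha ab+\overline{\alpha}ba$ of a monotone independent pair $a\prec b$, and then read off its distribution from Theorem~\ref{thm: M-linear span}. The case $m=0$ is degenerate: $A_N^0=I_N$ gives $T_{I_N}=j_NI_N(1-j_N)+(1-j_N)I_Nj_N=0$, hence $P_N=0$, consistent with $D_0=C_0-C_0^2=0$. So assume $m\geq1$. Then $Q(x)=x^m$ has vanishing constant term, so Proposition~\ref{prop: Lemma for Section 5} gives that $(T_{A_N^m},B_N)_N$ is asymptotically monotone independent with respect to $\psi_N$; let $(\cC,\varphi)$ be the limiting ncps and $a\prec b$ the limits of $T_{A_N^m}$ and $B_N$. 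In particular $\lim_{N\to\infty}\psi_N(P_N^k)=\varphi\big(p_{\alpha,\overline{\alpha}}(a,b)^k\big)$ for every $k\geq1$.

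Next I would pin down the two one-variable laws. Since $B_N$ sits in the subalgebra on which $\psi_N$ coincides with $\varphi_N=\tfrac1N(\E\circ Tr_N)$, the moments $\psi_N(B_N^k)$ converge to those of the standard semicircular law; in particular $\varphi(b)=0$ and $\varphi(b^2)=C_1=1$. For $a$, observe that $A_N^m$ lies in the algebra generated by $A_N$, so by Theorem~\ref{thm:inf free - WEJ} (together with Theorem~\ref{thm: Inf idem to M}) the pair $(A_N^m,j_N)$, $j_N=\sum_{j=1}^{N_0}E_N^{j,j}$, is asymptotically infinitesimally free; hence formula~\eqref{moments T_A_N} remains valid with $A_N$ replaced by $A_N^m$, and yields $\varphi(a^k)=0$ for odd $k$ and $\varphi(a^k)=D_m^{k/2}$ for even $k$, where $D_m=\lim_{N\to\infty}\big(tr_N(A_N^{2m})-tr_N(A_N^m)^2\big)$. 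Because $A_N$ is a Wigner matrix, $\lim_N tr_N(A_N^{2m})=C_m$ and $\lim_N tr_N(A_N^m)$ equals $C_{m/2}$ for $m$ even and $0$ for $m$ odd, so $D_m$ is exactly the quantity in the statement; it is nonnegative since $D_m=\varphi(a^2)=\lim_N\psi_N(T_{A_N^m}^2)\geq0$. Equivalently, $a^2=D_m\cdot 1$ in distribution.

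Finally I would substitute $\varphi(b)=0$ into Remark~\ref{rem: dist of span} — that is, Theorem~\ref{thm: M-linear span} specialised to $(\alpha,\beta)=(\alpha,\overline{\alpha})$ — to obtain $\varphi(p_{\alpha,\overline{\alpha}}^k)=0$ for odd $k$ and $\varphi(p_{\alpha,\overline{\alpha}}^k)=\big(\alpha\overline{\alpha}\,\varphi(b^2)\big)^{k/2}\varphi(a^k)$ for even $k$. Plugging in $\varphi(b^2)=1$ and $\varphi(a^k)=D_m^{k/2}$ shows that the limiting moments of $P_N$ vanish in odd degree and equal $(\omega^2)^{k/2}$ in even degree $k$, with $\omega^2=\varphi(p_{\alpha,\overline{\alpha}}^2)=|\alpha|^2D_m$. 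These are the moments of the symmetric two-point law $\mu=\tfrac12(\delta_{\omega}+\delta_{-\omega})$; since the moments grow at most geometrically, $\mu$ is compactly supported, hence moment-determinate, and is therefore the limiting distribution of $P_N$ with respect to $\psi_N$.

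The only genuinely non-formal point is the reduction of the $\psi_N$-moments of $T_{A_N^m}$ to the scalar $D_m$, i.e.\ the claim that~\eqref{moments T_A_N} survives replacing $A_N$ by the polynomial $A_N^m$; this rests on the asymptotic infinitesimal freeness of $A_N^m$ from the idempotent $j_N$, supplied by Theorem~\ref{thm:inf free - WEJ}. Everything else is a direct substitution into the explicit formulas already established in Sections~\ref{subsection 2.3} and~\ref{Section monotone polynomials}.
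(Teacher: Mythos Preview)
Your argument is essentially identical to the paper's: invoke Proposition~\ref{prop: Lemma for Section 5} for asymptotic monotone independence, use~\eqref{moments T_A_N} (via \cite[Proposition~5.3]{MT23}) for the moments of $T_{A_N^m}$, plug in the semicircular moments, and finish with Remark~\ref{rem: dist of span}. Your extra care with the $m=0$ case, the justification that $\psi_N=\varphi_N$ on $\mathrm{alg}(B_N)$, and the moment-determinacy remark are all fine additions. One point worth flagging: your computation gives $\omega^2=|\alpha|^2D_m$ (i.e.\ $\omega=|\alpha|\sqrt{D_m}$), since $\alpha\beta=\alpha\overline{\alpha}=|\alpha|^2$ in Remark~\ref{rem: dist of span}; this is the correct outcome of the formula, and the exponent on $|\alpha|$ in the statement and in the paper's displayed conclusion appears to be a typo.
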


\begin{proof}
By Proposition \ref{prop: Lemma for Section 5}, $(T_{A_N^m},B_N)_N$ is asymptotically monotone independent with respect to $\psi_N$. In addition, we note that the limiting distribution of $T_{A_N^m}$ with respect to $\psi_N$ can be easily computed following \cite[Proposition 5.3]{MT23}. Indeed, we have for any $k\geq 1$,
\begin{eqnarray*}
\lim_{N\to \infty}\psi_N(T_{A_N^m}^k) 
&=& 
\begin{cases}\Big[\lim\limits_{N\to \infty}\big( (\E\circ Tr_N)(A_N^{2m})-(\E\circ Tr_N)(A_N^m)^2\big)\Big]^{k/2} & \text{ if }  k \text{ is even,} \\
            0  & \text{ if }  k \text{ is odd.} 
        \end{cases}
\end{eqnarray*}
By the Wigner semicircular law, we infer that 
\begin{eqnarray*}
\lim_{N\to \infty}\psi_N(T_{A_N^m}^k) 
&=& 
\begin{cases} D_m^{k/2} & \text{ if }  k \text{ is even,} \\
            0  & \text{ if }  k \text{ is odd.} 
        \end{cases}
\end{eqnarray*}
Finally, observing that $\lim_{N\to\infty}(\E\circ Tr_N)(B_N)=0$ and  $\lim_{N\to\infty}(\E\circ Tr_N)(B_N^2)=1$, we get by applying Theorem \ref{thm: M-linear span} and taking into account Remark \ref{rem: dist of span} that for any $k \geq 1$, 
\begin{eqnarray*}
    \lim_{N\to\infty} \psi_N(P_N^k)=\begin{cases} \big(|\alpha|D_m\big)^{k/2} & \text{ if }k \text{ is even} \\
    0 & \text{ if }k \text{ is odd.}
    \end{cases} 
\end{eqnarray*}

\end{proof} 

\begin{rem}
The above result also holds for $A_N$ being an $N\times N$ GUE matrix, and $B_N$ an entry permutation of $A_N$ that is self-adjoint. In particular, $B_N$ can be chosen to be the transpose of $A_N$. Indeed, by \cite{MP16} and \cite{P22}, we have  that $(A_N,B_N)_N$ is asymptotically free and by combining Theorems \ref{thm: Inf idem to M} and \ref{thm:inf free - WEJ}, we obtain that $(T_{A_N},B_N)_N$ is asymptotically monotone independent with respect to $\psi_N$. Then by analogous arguments, we obtain the same limiting $*$-distribution of $P_N$ with respect to $\psi_N$ as in Proposition \ref{prop Wigner LS}. 
\end{rem}

Finally, we give another example around the polynomial in Proposition \ref{prop: law of general P} and show how our results easily extend to computing the distribution with respect to the partial trace of such a polynomial in Wigner matrices.  
\begin{prop}\label{prop Wigner general poly}
    Let $A_N$ and $B_N$ be $N \times N$ independent Wigner matrices and consider the polynomial given by
$$P_N=B_N^{2n}T_{A_N}B_N^{2h}T_{A_N}B_N^{2n}+B_N^{2m}T_{A_N}B_N^{2s}T_{A_N}B_N^{2m}$$ for some $n,m,h,s\geq 1$.  
Then $P_N$ converges in distribution with respect to $\psi_N$ to $\mu=\alpha\delta_{\vartheta+\zeta}+(1-\alpha)\delta_{\vartheta-\zeta}$ where $\delta$ denotes the dirac measure, $ \alpha=\frac{1}{2\zeta}(\zeta-\vartheta+C_hC_n^2+C_sC_m^2)$
with 
$$
\vartheta=\frac{1}{2}(C_{2m}C_s+C_{2n}C_h), \qquad \zeta=\frac{1}{2}\sqrt{(C_{2m}C_s-C_{2n}C_h)^2+4b_{1,2}^2C_sC_h},
$$
and $C_k$ being the $k$-th Catalan number.
\end{prop}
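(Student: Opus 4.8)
\textbf{Proof plan for Proposition \ref{prop Wigner general poly}.}

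The strategy is to reduce the statement to an application of Proposition \ref{prop: law of general P}, exactly as Proposition \ref{prop Wigner LS} reduced to Theorem \ref{thm: M-linear span}. First I would identify the monotone pair. By Proposition \ref{prop: Lemma for Section 5} (applied with $Q(x)=x$), the pair $(T_{A_N},B_N)_N$ is asymptotically monotone independent with respect to $\psi_N$; passing to the limit we obtain elements $a$ and $b$ in some ncps with $\{a\}\prec\{b\}$, where $a$ is the limit of $T_{A_N}$ and $b$ the limit of $B_N$ with respect to $\psi_N$. Then $P_N$ is precisely $b^{2n}ac_1ab^{2n}+b^{2m}ac_2ab^{2m}$ in the limit, with $c_1 = b^{2h}$, $c_2 = b^{2s}$; but to match the template of Proposition \ref{prop: law of general P} literally we should set $b_1 = b^{2n}$, $b_2 = b^{2m}$, $c_1 = b^{2h}$, $c_2 = b^{2s}$, treating these as elements of the algebra generated by $b$. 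Since all of $b_1,b_2,c_1,c_2$ lie in $\mathring{\cA}_b$ and $\{a\}\prec\mathring{\cA}_b$, the hypothesis of Proposition \ref{prop: law of general P} is satisfied.

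Next I would compute the scalar parameters $\tilde b_1,\tilde b_2,\tilde c_1,\tilde c_2, b_{1,1},b_{2,2},b_{1,2}$ from the limiting distribution of $b$. Here the key input is that the limiting $\psi_N$-distribution of $T_{A_N}$ and of $B_N$ are both standard semicircular, so that $\varphi(b^{k})$ equals the $k$-th moment of the semicircle, i.e. $C_{k/2}$ for $k$ even and $0$ for $k$ odd. Hence $\tilde b_1 = \varphi(b^{2n}) = C_n$, $\tilde b_2 = \varphi(b^{2m}) = C_m$, $\tilde c_1 = \varphi(b^{2h}) = C_h$, $\tilde c_2 = \varphi(b^{2s}) = C_s$, while $b_{1,1} = \varphi(b_1^2) = \varphi(b^{4n}) = C_{2n}$, $b_{2,2} = \varphi(b_2^2) = \varphi(b^{4m}) = C_{2m}$, and $b_{1,2} = \varphi(b_1 b_2) = \varphi(b^{2n+2m}) = C_{n+m}$. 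Substituting these into the formulas $\vartheta = \tfrac12(b_{2,2}\tilde c_2 + b_{1,1}\tilde c_1)$ and $\zeta = \tfrac12\sqrt{(b_{2,2}\tilde c_2 - b_{1,1}\tilde c_1)^2 + 4 b_{1,2}^2 \tilde c_2 \tilde c_1}$ and $\alpha = \tfrac1{2\zeta}(\zeta - \vartheta + \tilde c_1 \tilde b_1^2 + \tilde c_2 \tilde b_2^2)$ yields exactly the claimed $\vartheta = \tfrac12(C_{2m}C_s + C_{2n}C_h)$, $\zeta = \tfrac12\sqrt{(C_{2m}C_s - C_{2n}C_h)^2 + 4 b_{1,2}^2 C_s C_h}$ (with $b_{1,2}=C_{n+m}$), and $\alpha = \tfrac1{2\zeta}(\zeta - \vartheta + C_h C_n^2 + C_s C_m^2)$. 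Since $a$ is an even element with $a^2$ having the squared-semicircle distribution, and one checks $\mu_{a^2}$ plays only the role of $\mu_{a^{2k}}=\varphi(a^{2k})$ as a multiplicative factor which must be accounted for; but in the limiting moments $\lim_N \psi_N(P_N^k) = \varphi(a^{2k})\alpha_k$, and since $a$ is the weak limit and we want a genuine probability measure, the relevant observation is that Proposition \ref{prop: law of general P} already packages the answer as $\mu_{a^2}\boxtimes(\text{two-point measure})$; I would simply invoke it and then note that $\mu_{a^2}$ here is the push-forward of the standard semicircle under $x\mapsto x^2$ — wait, more carefully: one must determine $\mu_{a^2}$ for $a=\lim T_{A_N}$, and by \eqref{moments T_A_N} the distribution of $a$ is symmetric Bernoulli $\tfrac12(\delta_{1}+\delta_{-1})$ since $\lim(\E\circ tr_N)(A_N^2)-\lim(\E\circ tr_N)(A_N)^2 = 1 - 0 = 1$; hence $a^2 = 1$ and $\mu_{a^2}=\delta_1$, so the convolution collapses and $\mu = \alpha\delta_{\vartheta+\zeta}+(1-\alpha)\delta_{\vartheta-\zeta}$ as stated.

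The main obstacle, such as it is, is bookkeeping: one must be careful that the elements $b_1=b^{2n}$, $b_2=b^{2m}$, $c_1=b^{2h}$, $c_2=b^{2s}$ are not independent of one another (they are all powers of the single element $b$), so the mixed moments $\varphi(b_i b_j)$ are genuine moments of $b$ and not products — this is why $b_{1,2} = C_{n+m}$ rather than $\tilde b_1 \tilde b_2 = C_n C_m$. Everything else — checking that the constant term of each monomial vanishes so Proposition \ref{prop: Lemma for Section 5} applies, and that the asymptotic freeness and infinitesimal freeness inputs from Theorems \ref{thm: Inf idem to M} and \ref{thm:inf free - WEJ} are in force — is routine and identical to the argument in Proposition \ref{prop Wigner LS}. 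A final sanity check: taking $n=m$, $h=s$ should degenerate $P_N$ to a single term $2b^{2n}ab^{2h}ab^{2n}$ and one can verify the two-point measure collapses accordingly, which I would mention briefly as a consistency remark.
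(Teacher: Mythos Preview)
Your approach is correct and matches the paper's own proof: invoke Proposition~\ref{prop: Lemma for Section 5} to get asymptotic monotone independence of $(T_{A_N},B_N)$, pass to limit elements $a\prec b$, set $b_1=b^{2n}$, $b_2=b^{2m}$, $c_1=b^{2h}$, $c_2=b^{2s}$, read off the scalar inputs from the semicircle moments, and apply Proposition~\ref{prop: law of general P}. One slip to fix: the limiting $\psi_N$-distribution of $T_{A_N}$ is \emph{not} semicircular but the symmetric Bernoulli $\tfrac12(\delta_1+\delta_{-1})$, as you yourself correctly compute later; this does not damage your argument since at that stage you only use the distribution of $b$. Your explicit verification that $\mu_{a^2}=\delta_1$ (so that the $\mu_{a^2}$ factor from Proposition~\ref{prop: law of general P} collapses) is in fact a detail the paper's own proof leaves implicit.
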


\begin{proof}
It is notable that $(T_{A_N},B_N)_N$ is asymptotically monotone independent with respect to $\psi_N$ by Proposition \ref{prop: Lemma for Section 5}. Then there is a $*$-probability space $(\cC,\psi)$ and elements $a$ and $b$ in $\cC$ such that  $a\prec b$ and $(T_{A_N},B_N)$ converges to $(a,b)$ in $*$-distribution. In other words, for any $k\geq 1$,
$$
\lim_{N\to\infty} \psi_N(P_N^k)=\psi(p^k)
\text{ where } p=b_1ac_1ab_1+b_2ac_2ab_2. 
$$ 
with $b_1=b^{2n}, b_2=b^{2m}, c_1=b^{2h}, c_2=b^{2s}.$ 
Following the notation in Proposition \ref{prop: law of general P}, we have 
$\tilde{b}_1=C_n, \tilde{b}_2=C_m, \tilde{c}_1=C_h, \tilde{c}_2=C_s$, and 
\begin{equation}\label{formula: psi bb }
b_{1,2}=\begin{cases}
C_{2n} & \text{ if } (i,j)=(1,1), \\
C_{2m} & \text{ if } (i,j)=(2,2), \\
C_{n+m} & \text{ if } (i,j)=(1,2) \text{ or }(2,1).
\end{cases}
\end{equation}

Then by applying Proposition \ref{prop: law of general P}, we conclude that
\begin{equation*}
\psi(p^k)=\frac{1}{2\zeta}(\zeta-\vartheta+C_hC_n^2+C_sC_m^2) (\vartheta+\zeta)^k + \frac{1}{2\zeta}(\zeta+\vartheta-C_hC_n^2-C_sC_m^2)(\vartheta-\zeta)^k
\end{equation*}
where 
$$
\vartheta=\frac{1}{2}(C_{2m}C_s+C_{2n}C_h), \quad \text{ and } \quad \zeta=\frac{1}{2}\sqrt{(C_{2m}C_s-C_{2n}C_h)^2+4C_{n+m}^2C_sC_h}.
$$
\end{proof}

\begin{rem}
$\mu=\alpha\delta_{\vartheta+\zeta}+(1-\alpha)\delta_{\vartheta-\zeta}$ in Proposition \ref{prop Wigner general poly} is a probability measure. One could also check that indeed $\alpha \in [0,1]$ using the fact that
$$
2C_n^2C_m^2\leq C_{n+m}^2\leq C_{2n}C_{2m} \text{ for all }n,m\geq 1. 
$$
Note that the first inequality can be deduced by induction, while the second one follows from the Cauchy-Schwartz inequality with respect to the inner product on polynomials defined by 
$$\langle f,g\rangle:=\int f(x)g(x)d\mu \text{  where  }d\mu(t)=\frac{1}{2\pi}\sqrt{4-t^2}dt.$$ 
\end{rem}

\section*{Acknowledgement}

The authors would like to thank Octavio Arizmendi, Benson Au and Adri\'{a}n Celestino for valuable discussions, and the referee for identifying a mistake in an earlier version as well as for providing insightful comments that enhanced the clarity and exposition of the paper.

\end{document}